\documentclass[12pt]{amsart}
\usepackage[utf8]{inputenc}
\usepackage{commath}
\usepackage[ maxnames=4,maxalphanames=4,style=alphabetic-verb,doi=false,
  url=false,
  isbn=false,
  eprint=false
]{biblatex}
\bibliography{main.bib}
\usepackage[margin=1in]{geometry}
\usepackage{graphicx}
\usepackage{caption}
\usepackage{subcaption}
\usepackage{amsmath,amsfonts,amssymb,amsthm,amsaddr,etoolbox}
\usepackage{latexsym,hyperref}
\usepackage[most]{tcolorbox}
\usepackage{xcolor}
\usepackage{scrextend}
\usepackage{quiver}
\usepackage{enumitem}
\usepackage[normalem]{ulem}
\usepackage{mathtools}
\usepackage[linewidth=1pt]{mdframed}
\usepackage{tikz-cd}
\usepackage{float}

\DeclareFieldFormat{postnote}{#1}
\DeclareFieldFormat{multipostnote}{#1}

\author[R. Easwar]{Rohun Easwar}

\email{rohuneaswar@gmail.com}
\title[Proofs of some conjectures on binomial coefficients]{An application of Jacobi's residue formula and proofs of some other conjectures on binomial coefficients}
\keywords{Jacobi's residue formula, MacMahon's master theorem, binomial coefficients, generating functions, analytic combinatorics}
\subjclass[2020]{05A10, 05A15, 11B65}

\newcommand\restr[2]{{
  \left.\kern-\nulldelimiterspace 
  #1 
  \littletaller 
  \right|_{#2} 
  }}

\theoremstyle{plain}
\newtheorem{defn}{Definition}[section]

\newtheorem{lem}[defn]{Lemma}

\newtheorem*{prop*}{Proposition}
\newtheorem*{thm*}{Theorem}
\newtheorem{thm}[defn]{Theorem}
\newtheorem{cor}[defn]{Corollary}

\newtheorem*{claim*}{Claim}

\theoremstyle{remark}
\newtheorem{rem}[defn]{Remark}
\theoremstyle{remark}

\theoremstyle{remark}

\theoremstyle{remark}

\theoremstyle{remark}

\theoremstyle{remark}

\theoremstyle{remark}

\theoremstyle{remark}

\theoremstyle{remark}

\numberwithin{equation}{section}

\theoremstyle{definition}

\makeatletter
\let\save@mathaccent\mathaccent
\newcommand*\if@single[3]{%
  \setbox0\hbox{${\mathaccent"0362{#1}}^H$}%
  \setbox2\hbox{${\mathaccent"0362{\kern0pt#1}}^H$}%
  \ifdim\ht0=\ht2 #3\else #2\fi
  }
\newcommand*\rel@kern[1]{\kern#1\dimexpr\macc@kerna}
\newcommand*\widebar[1]{\@ifnextchar^{{\wide@bar{#1}{0}}}{\wide@bar{#1}{1}}}
\newcommand*\wide@bar[2]{\if@single{#1}{\wide@bar@{#1}{#2}{1}}{\wide@bar@{#1}{#2}{2}}}
\newcommand*\wide@bar@[3]{%
  \begingroup
  \def\mathaccent##1##2{%
    \let\mathaccent\save@mathaccent
    \if#32 \let\macc@nucleus\first@char \fi
    \setbox\z@\hbox{$\macc@style{\macc@nucleus}_{}$}%
    \setbox\tw@\hbox{$\macc@style{\macc@nucleus}{}_{}$}%
    \dimen@\wd\tw@
    \advance\dimen@-\wd\z@
    \divide\dimen@ 3
    \@tempdima\wd\tw@
    \advance\@tempdima-\scriptspace
    \divide\@tempdima 10
    \advance\dimen@-\@tempdima
    \ifdim\dimen@>\z@ \dimen@0pt\fi
    \rel@kern{0.6}\kern-\dimen@
    \if#31
      \overline{\rel@kern{-0.6}\kern\dimen@\macc@nucleus\rel@kern{0.4}\kern\dimen@}%
      \advance\dimen@0.4\dimexpr\macc@kerna
      \let\final@kern#2%
      \ifdim\dimen@<\z@ \let\final@kern1\fi
      \if\final@kern1 \kern-\dimen@\fi
    \else
      \overline{\rel@kern{-0.6}\kern\dimen@#1}%
    \fi
  }%
  \macc@depth\@ne
  \let\math@bgroup\@empty \let\math@egroup\macc@set@skewchar
  \mathsurround\z@ \frozen@everymath{\mathgroup\macc@group\relax}%
  \macc@set@skewchar\relax
  \let\mathaccentV\macc@nested@a
  \if#31
    \macc@nested@a\relax111{#1}%
  \else
    \def\gobble@till@marker##1\endmarker{}%
    \futurelet\first@char\gobble@till@marker#1\endmarker
    \ifcat\noexpand\first@char A\else
      \def\first@char{}%
    \fi
    \macc@nested@a\relax111{\first@char}%
  \fi
  \endgroup
}
\makeatother

\begin{document}
\begin{abstract}
We prove seven conjectures involving binomial coefficients. These conjectures appear in The On-Line Encyclopedia of Integer Sequences (OEIS). Among these is an application of Jacobi's residue formula. We also apply MacMahon's master theorem to give a new and independently obtained proof of a recently established result.
\end{abstract}
\maketitle






















\section{Introduction}
The  On-Line Encyclopedia of Integer Sequences (OEIS) \cite{OEIS} is a database that records sequences of integers, and entries often contain comments on the sequence featured in it. These comments are sometimes conjectures about the properties of the sequence. In this paper, we prove seven such conjectures. We also present a new proof, independently obtained, of a recently established result.

In \S~\ref{sec 2}, we discuss some notation and preliminaries that will be useful throughout the paper. In \S~\ref{sec 3}, we derive an expression for the small 3-Schr\"{o}der numbers $\mathrm{A034015}(n)$, found in \cite{A034015}. The evaluation of the diagonal of a rational function in terms of $\mathrm{A002897}(n)$ (\cite{A002897}) and an application of MacMahon's master theorem (see \cite{MMTshort}) are seen in \S~\ref{sec 4}. Jacobi's residue formula (see \cite{GesselResidue}) is used in \S~\ref{sec 5} to express $\mathrm{A002897}(n)$ (\cite{A002897}) as the coefficient of a multivariate polynomial. In \S~\ref{sec 6}, the ordinary power series generating function of $\mathrm{A290575}(n)$ (\cite{A290575}) is shown to have a square root with integer coefficients. The next section, \S~\ref{sec 7}, is concerned with recurrence relations satisfied by $\mathrm{A127361}(n)$ (\cite{A127361}) and $\mathrm{A105872}(n)$ (\cite{A105872}). Finally, \S~\ref{sec 8} discusses a generalisation of an asymptotic relation satisfied by $\mathrm{A094213}(n)$ (\cite{A094213}).

\subsection*{Acknowledgements}
The author thanks Amit Kuber for reading the manuscript and providing helpful comments on its presentation and organisation. The author also thanks Rahul Joy Das for valuable discussions.

\section{Notation and Preliminaries}\label{sec 2}
Denote by $\mathbb{C}((x_1,\dots,x_m))$ the formal Laurent series ring in indeterminates $x_1,\dots,x_m$ over $\mathbb{C}$, that is, the ring of formal series
\[\sum\limits_{k_1\ge n_1} \cdots \sum\limits_{k_m\ge n_m} a_{k_1,\dots,k_m} x_1^{k_1}\cdots x_m^{k_m},\]
where $n_1,\dots,n_m\in\mathbb{Z}$ and $a_{k_1,\dots,k_m}\in\mathbb{C}$.

Denote by $\mathbb{C}[[x_1,\dots,x_m]]$ the formal power series ring, and by $\mathbb{C}(x_1,\dots,x_m)$ the field of rational functions in indeterminates $x_1,\dots,x_m$ over $\mathbb{C}$.

\begin{rem}
In this paper, we use the same symbol to refer to a power series with positive radius of convergence and the analytic function represented by it on its disc of convergence. Also note that formal and analytic differentiation agree in the disc of convergence (see \cite[2.1]{ACSV}).
\end{rem}

For $\alpha_1,\dots,\alpha_m\in \mathbb{Z}$, define the corresponding \emph{coefficient extraction} operator 
\[[x_1^{\alpha_1}\cdots x_m^{\alpha_m}]:\mathbb{C}((x_1,\dots,x_m))\to\mathbb{C} \text{ by}\] 
\[\sum\limits_{k_1\ge n_1} \cdots \sum\limits_{k_m\ge n_m} a_{k_1,\dots,k_m} x_1^{k_1}\cdots x_m^{k_m} \mapsto a_{\alpha_1,\dots,\alpha_m}.\]

For $\alpha_1,\dots,\alpha_m\in \mathbb{Z}_{\ge 0}$, we use the same notation for the restriction of the above-defined coefficient extraction operators to $\mathbb{C}[[x_1,\dots,x_m]]$.

\section{An expression for the small 3-Schr\"{o}der numbers}\label{sec 3}

In this section, we derive an expression for the small 3-Schr\"{o}der numbers. We begin by recalling definitions pertaining to $m$-Schr\"{o}der paths, $m$-Schr\"{o}der numbers, and their weighted versions. The reader may use \cite{YangJiang} as a reference.

\begin{defn}\cite[Definition~2.2]{YangJiang}
Let $m\ge 2$ and $n\ge 1$ be positive integers.

An \emph{$m$-Schr\"{o}der path of length $mn$} is a lattice path from $(0,0)$ to $(mn, 0)$ which never goes below the x-axis, with steps $U = (1, 1)$, $D^{(m)} = (1, 1 - m)$, and $H^{(m)} = (2, 2 - m)$.

The \emph{$n^{\text{th}}$ $m$-Schr\"{o}der number}, denoted $r_n^{(m)}$, is the number of $m$-Schr\"{o}der paths of length $mn$.

An $m$-Schr\"{o}der path with no steps of type $H^{(m)}$ ending on the $x$-axis is called a \emph{small $m$-Schr\"{o}der path}.

The \emph{$n^{\text{th}}$ small $m$-Schr\"{o}der number}, denoted $s_n^{(m)}$, is the number of small $m$-Schr\"{o}der paths of length $mn$.
\end{defn}

We now discuss the weighted versions of the above definitions.

\begin{defn}\cite{YangJiang}
Let $m\ge 2$ and $n\ge 1$ be positive integers and $a,b\in\mathbb{C}$.

An \emph{$(m;a,b)$-Schr\"{o}der path of length $mn$} is an $m$-Schr\"{o}der path of length $mn$ whose each step of the form $D^{(m)}$ is assigned weight $a$ and each step of the form $H^{(m)}$ is assigned weight $b$.

The \emph{weight} of an $(m;a,b)$-Schr\"{o}der path is the product of the weights of its steps.

Let $r_n^{(m,a,b)}$ denote the sum of the weights of all $(m;a,b)$-Schr\"{o}der paths of length $mn$. We call this the \emph{$n^{\text{th}}$ $(m;a,b)$-Schr\"{o}der number}.

The \emph{$n^{\text{th}}$ small $(m;a,b)$-Schr\"{o}der number}, denoted $s_n^{(m,a,b)}$, is the sum of the weights of all small $(m;a,b)$-Schr\"{o}der paths of length $mn$.
\end{defn}

\begin{rem}\label{rem:unweighting}
From the above definition, it is clear that $r_n^{(m)}=r_n^{(m,1,1)}$ and $s_n^{(m)}=s_n^{(m,1,1)}$.
\end{rem}

The sequence A034015(\cite{A034015}) of small $3$-Schr\"{o}der numbers is given by 
\[s_{n+1}^{(3)}=\mathrm{A034015}(n)= \sum\limits_{k=0}^n\frac{2^k\binom{n}{k}\binom{2n+2}{k}}{k+1}
\]
for $n\ge 0$.

\begin{thm}\cite[Theorem~2.4]{YangJiang}\label{thm:Yang2.4}
Let $m\ge 2$ and $n\ge 1$ be positive integers and $a,b\in\mathbb{C}$. Then
\[r_n^{(m,a,b)}=\frac{1}{n}\sum\limits_{j\ge 1}\binom{(m-1)n}{j-1}\binom{n}{j}\left(a+b\right)^ja^{n-j}.
\]
\end{thm}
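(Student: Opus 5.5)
We will prove the formula for $r_n^{(m,a,b)}$ by deriving a functional equation for its generating function and applying Lagrange inversion. Set $R(x)=1+\sum_{n\ge1} r_n^{(m,a,b)}x^n$, where $x$ marks a height-$m$ ``unit'' of length.

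\textbf{First-return decomposition.} Take a nonempty $(m;a,b)$-Schr\"{o}der path. If it begins with an $H^{(m)}$ step from the $x$-axis, it must have $m=2$, since otherwise the step would go below the axis. This case is awkward, so we will avoid it by decomposing by levels rather than by the first step. Encode each $H^{(m)}=(2,2-m)$ step as $U$ followed by $D^{(m)}$, which has the same displacement. The weight then becomes $b$ on a marked $UD^{(m)}$ pair. Under this encoding, an $m$-Schr\"{o}der path of length $mn$ is a Dyck-like path with $(m-1)n$ up-steps $U$ and $n$ down-steps $D^{(m)}$. Each $D$ step that is immediately preceded by a $U$ step may optionally be merged into an $H$.

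Because $H$ has $x$-displacement $2$ and $U$ followed by $D$ also has $x$-displacement $2$, the merging preserves length. The positivity constraint is also respected: the intermediate point of $UD$ lies above the endpoints, so it is never below the axis. Hence $r_n^{(m,a,b)}$ equals the sum over $m$-ary Dyck paths ($U$ steps of height $+1$, $D$ steps of height $-(m-1)$) of $\prod_{D}w(D)$. Here $w(D)=a+b$ if the $D$ step is preceded by a $U$ step (a peak), and $w(D)=a$ otherwise.

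\textbf{Counting.} We now sum over the number $j$ of peaks. The number of such $m$-ary paths with $n$ down-steps and exactly $j$ peaks is a Narayana-type count, namely
\[N(n,j)=\frac1n\binom{n}{j}\binom{(m-1)n}{j-1}.\]
Weighting each path by $(a+b)^ja^{n-j}$ and summing over $j$ gives exactly the stated formula.

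\textbf{Proving the Narayana count.} To obtain $N(n,j)$, we use the generating function $F(x,t)$ of these paths, with $x$ marking $D$ steps and $t$ marking peaks. The standard decomposition of an $m$-ary path by its last-return structure gives
\[F=1+xF^{m-1}\bigl(F-1+t\bigr)\]
with the appropriate correction for the peak. More cleanly, write each path as a sequence of irreducible blocks $U\,P_1\,U\,P_2\cdots U\,P_{m-1}\,D$. The final $UD$ in a block creates a peak exactly when $P_{m-1}$ is empty. Lagrange inversion in the form $[x^n]F=\frac1n[y^{n-1}]\phi(y)^n$ with a suitable $\phi$ then extracts the coefficient. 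Expanding $\phi^n$ binomially in $t$ yields $\frac1n\binom{n}{j}\binom{(m-1)n}{j-1}t^j$.

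\textbf{Main obstacle.} The hardest step is setting up the peak-marking functional equation so that it is exactly in Lagrange form. What I would try first is the substitution $G=F-1$. Alternatively, one can use the cycle lemma directly: for sequences of $(m-1)n$ $U$'s and $n$ $D$'s with $j$ ``$UD$'' adjacencies counted cyclically, the cycle lemma gives a factor $1/((m-1)n+n)\cdot$(cyclic count). The cyclic count of arrangements with $j$ cyclic $UD$ descents is $\frac{mn}{j}\binom{n-1}{j-1}\binom{(m-1)n}{j-1}$-type. This simplifies to $N(n,j)$ via $\frac{1}{j}\binom{n-1}{j-1}=\frac1n\binom nj$.
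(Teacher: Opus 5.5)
The paper does not prove this statement. It is quoted from \cite[Theorem~2.4]{YangJiang} and used only as a black box in the proof of Lemma~\ref{lem:intermediate-id}, so your argument has to stand on its own. Its main line is correct, and it works by a bijective reduction to Fuss--Narayana numbers. This reduction has the advantage of explaining the factor $(a+b)^ja^{n-j}$ directly: $j$ counts peaks, and each peak may or may not be fused into an $H^{(m)}$ step.

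The details check out.
\begin{itemize}
\item From $u+d+2h=mn$ and $u+(1-m)d+(2-m)h=0$ one gets $d+h=n$ and $u+h=(m-1)n$, so expanding every $H^{(m)}$ into $UD^{(m)}$ yields an $m$-ary Dyck path.
\item Distinct peaks are disjoint, so weighted Schr\"{o}der paths correspond bijectively to pairs consisting of a Dyck path and a subset of its peaks. Summing over subsets gives the weight $(a+b)^ja^{n-j}$.
\item Your equation $F=1+xF^{m-1}(F-1+t)$ is already exact, with no further correction needed.
\end{itemize}

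The substitution you propose removes the ``main obstacle'' immediately. $G=F-1$ satisfies $G=x\phi(G)$ with $\phi(u)=(1+u)^{m-1}(u+t)$, so
\[
[x^n]F=\frac1n[u^{n-1}](1+u)^{(m-1)n}(u+t)^n=\frac1n\sum_{j\ge1}\binom{n}{j}\binom{(m-1)n}{j-1}t^j .
\]
Working over $\mathbb{Q}(t)$ makes $\phi(0)=t$ invertible, so the standard form of Lagrange inversion applies. Write this step out explicitly rather than leaving it as the hard part. The opening paragraph with $R(x)$ and the first-step case analysis is never used and can be dropped.

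The cycle-lemma alternative, however, is wrong as written.
\begin{itemize}
\item Words with $(m-1)n$ letters $U$ and $n$ letters $D$ have total displacement $0$. For such words, the number of rotations giving a nonnegative path equals the number of times the minimum is attained, so there is no uniform factor $1/(mn)$.
\item The number of such words with $j$ cyclic $UD$ adjacencies is actually $\frac{mn}{j}\binom{n-1}{j-1}\binom{(m-1)n-1}{j-1}$. Dividing by $mn$ gives $3/2$ for $m=3$, $n=2$, $j=2$, whereas the true count is $2$ (the paths $UUDUUD$ and $UUUDUD$).
\end{itemize}

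The correct version uses $(m-1)n+1$ letters $U$.
\begin{itemize}
\item The total displacement is then $1$, so all $mn+1$ rotations are distinct and exactly one has all partial sums positive.
\item Deleting the initial $U$ of that rotation gives an $m$-ary Dyck path with the same number of peaks.
\item There are $\frac{mn+1}{j}\binom{n-1}{j-1}\binom{(m-1)n}{j-1}$ such words with $j$ cyclic $UD$ adjacencies, so dividing by $mn+1$ gives $\frac1j\binom{n-1}{j-1}\binom{(m-1)n}{j-1}=\frac1n\binom{n}{j}\binom{(m-1)n}{j-1}$.
\end{itemize}
Either fix the aside this way or omit it; the Lagrange-inversion route alone suffices.
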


\begin{thm}\cite[Theorem~2.9]{YangJiang}\label{thm:Yang2.9}
Let $m\ge 2$ and $n\ge 1$ be positive integers and $a,b\in\mathbb{C}$. Then $(a+b)s_n^{(m,a,b)}=ar_n^{(m,a,b)}$. In particular, $2s_n^{(m)}=r_n^{(m)}$.
\end{thm}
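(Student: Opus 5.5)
The plan is a first-return (prime) decomposition of $(m;a,b)$-Schr\"{o}der paths, followed by a small generating-function identity. Let $t$ mark horizontal length, and set
\[R(t)=1+\sum_{n\ge1} r_n^{(m,a,b)}t^{mn},\qquad S(t)=1+\sum_{n\ge1} s_n^{(m,a,b)}t^{mn}.\]
Call a nonempty path \emph{prime} if it touches the $x$-axis only at its endpoints. Every path factors uniquely as a sequence of primes. A prime ends either with a step $D^{(m)}$ (from height $m-1$) or with a step $H^{(m)}$ (from height $m-2$). Let $P(t)$ and $Q(t)$ be the weighted generating functions of these two kinds of primes. Since a path is small exactly when none of its primes ends in $H^{(m)}$, we get
\[R=\frac{1}{1-P-Q},\qquad S=\frac{1}{1-P}.\]

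The key step is to relate $P$ and $Q$. A prime ending in $H^{(m)}$ has the form $U\,\gamma\,H^{(m)}$, where $\gamma$ goes from height $1$ to height $m-2$ and stays at height $\ge1$; for $m=2$, $\gamma$ is empty. Call the generating function of such $\gamma$ by $G$, so $Q=b\,t^{3}G$.

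A prime ending in $D^{(m)}$ is $U\,\delta\,D^{(m)}$, where $\delta$ goes from height $1$ to height $m-1$ and stays at height $\ge1$. I decompose $\delta$ at its last visit to height $m-2$. After that visit the path must take a step $U$ to height $m-1$, and then never goes below $m-1$ again. The reason is that any $D^{(m)}$ or $H^{(m)}$ step starting at height $\ge m-1$ that lands below $m-1$ lands at height $\le m-2$, and in particular if it lands at $m-2$ this contradicts the choice of last visit. This step needs care: one must check that steps cannot jump over the level $m-2$ from above without landing at or below it, and then continue argument by translating; I expect it to be the main technical obstacle. The upshot should be $\delta=\gamma\,U\,\varepsilon$, with $\varepsilon$ a weighted excursion above level $m-1$, so that $\varepsilon$ is counted by $R$. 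Since $U D^{(m)}$ has the same length as $H^{(m)}$, this gives
\[P=a\,t^{3}G\,R=\frac{a}{b}\,Q\,R\]
(when $b\neq0$; otherwise argue with $G$ directly, or by polynomial continuity in $b$).

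It remains to do the algebra. From $R(1-P-Q)=1$ and $Q=bP/(aR)$ we get
\[a(R-1)=P(aR+b).\]
Rearranging, this reads $(1-P)(aR+b)=a+b$. Hence
\[(a+b)(S-1)=\frac{(a+b)P}{1-P}=P(aR+b)=a(R-1).\]
Comparing coefficients of $t^{mn}$ for $n\ge1$ gives $(a+b)s_n^{(m,a,b)}=a\,r_n^{(m,a,b)}$. Setting $a=b=1$ and using Remark~\ref{rem:unweighting} gives $2s_n^{(m)}=r_n^{(m)}$.
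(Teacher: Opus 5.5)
The paper gives no proof of this statement: it is quoted from Yang--Jiang and used only as a black box, with $m=3$ and $a=b=1$, in Lemma~\ref{lem:intermediate-id}. So your argument has to stand on its own. For $m\ge 3$ it does.

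The prime factorization giving $R=1/(1-P-Q)$ and $S=1/(1-P)$ is correct. The step you flagged as the main obstacle is easy. $U$ is the only step that raises the height, and it raises it by exactly $1$. So if, after the last visit to height $m-2$, the path ever landed at height $\le m-2$, it would visit $m-2$ again: either immediately, or on its way back up to the endpoint of $\delta$ at height $m-1$. That contradicts maximality. Conversely, $U\gamma U\varepsilon D^{(m)}$ is always a $D$-prime whose last visit to height $m-2$ before the final step is the end of $\gamma$. So you have a bijection, and $P=at^3GR$, $Q=bt^3G$.

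Running the same last-visit decomposition through heights $1,\dots,m-3$ gives $G=t^{m-3}R^{m-2}$, hence $R=1+t^m(aR^m+bR^{m-1})$. Lagrange inversion then yields Theorem~\ref{thm:Yang2.4}, the other result the paper imports without proof, so your approach covers both.

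Two repairs are needed. First, the case $m=2$ is misdescribed. $H^{(2)}=(2,0)$ is flat, so a prime ending in $H^{(2)}$ is the single step $H^{(2)}$, not $U\gamma H^{(2)}$. In a $D$-prime $U\delta D^{(2)}$, $\delta$ stays at height $\ge 1$, so it never visits height $m-2=0$ and your last-visit cut does not exist. Directly, $Q=bt^2$ and $P=at^2R$, so $bP=aQR$ still holds and the algebra is unaffected. A uniform version for all $m\ge 2$: write an $H$-prime as $\pi H^{(m)}$, where $\pi$ runs from height $0$ to height $m-2$ without returning to the axis ($\pi$ is empty when $m=2$). Send it, together with an arbitrary path $\varepsilon$, to $\pi U\varepsilon D^{(m)}$, with $\varepsilon$ lifted to level $m-1$.

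Second, avoid the divisions. The bijection gives $bP=aQR$ outright. Multiplying $R(1-P-Q)=1$ by $a$ gives $a(R-1)=aPR+aQR=P(aR+b)$ with no hypothesis on $a$ or $b$. Your formula $Q=bP/(aR)$ silently assumed $a\neq 0$, not only $b\neq 0$. After that, your algebra only inverts $1-P$, which has constant term $1$. So $(a+b)(S-1)=a(R-1)$ holds for all $a,b\in\mathbb{C}$, and the coefficient comparison finishes the proof.
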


\begin{rem}
A comment by Sloane in \cite{A034015} states that theorems \ref{thm:Yang2.4} and \ref{thm:Yang2.9} may be used to obtain a formula for $s_{n+1}^{(3)}$. Subsequently, Yan explicitly stated the identity $s_{n+1}^{(3)}=\frac{1}{n+1}\sum\limits_{r=1}^{n+1}\binom{2n+2}{r-1}\binom{n+1}{r}2^{r-1}$ in \cite{A034015}, referencing \cite[Theorem~4.4]{Parking}. We provide a proof of this identity using Sloane's comment.
\end{rem}

\begin{lem}\label{lem:intermediate-id}
For $n\ge 0$, we have
\[s_{n+1}^{(3)} = \frac{1}{n+1}\sum\limits_{r=1}^{n+1}\binom{2n+2}{r-1}\binom{n+1}{r}2^{r-1}.
\]
\end{lem}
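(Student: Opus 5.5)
We specialise Theorem~\ref{thm:Yang2.4} and Theorem~\ref{thm:Yang2.9} to $m=3$, $a=b=1$, and replace the index $n$ by $n+1$; this is legitimate since $n\ge 0$ gives $n+1\ge 1$.

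First, by Remark~\ref{rem:unweighting} and the unweighted case of Theorem~\ref{thm:Yang2.9}, we have $s_{n+1}^{(3)}=\tfrac12 r_{n+1}^{(3)}$, with $r_{n+1}^{(3)}=r_{n+1}^{(3,1,1)}$.

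Second, Theorem~\ref{thm:Yang2.4} with $m=3$, $a=b=1$ and $n$ replaced by $n+1$ gives
\[
r_{n+1}^{(3)}=\frac{1}{n+1}\sum_{j\ge 1}\binom{2(n+1)}{j-1}\binom{n+1}{j}2^j\cdot 1^{\,n+1-j}.
\]
Here $(m-1)(n+1)=2n+2$, and $a+b=2$.

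Third, we truncate the sum. The factor $\binom{n+1}{j}$ vanishes for $j>n+1$, so the range $j\ge 1$ may be replaced by $1\le j\le n+1$.

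Finally, dividing by $2$ turns $2^j$ into $2^{j-1}$. Renaming $j$ as $r$ then yields exactly the stated formula for $s_{n+1}^{(3)}$.

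There is no real obstacle here. The only points needing care are that the specialisation $a=b=1$ indeed recovers the unweighted numbers, which is Remark~\ref{rem:unweighting}, and that the index shift $n\mapsto n+1$ stays within the hypothesis $n\ge 1$ of the cited theorems.
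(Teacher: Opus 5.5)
Your proposal is correct and is essentially the paper's proof: you write $s_{n+1}^{(3)}=\tfrac12 r_{n+1}^{(3,1,1)}$ via Theorem~\ref{thm:Yang2.9} and Remark~\ref{rem:unweighting}, expand with Theorem~\ref{thm:Yang2.4} at $m=3$, $a=b=1$, and absorb the $\tfrac12$ into $2^{j}$. The paper does the same, and your remark that $\binom{n+1}{j}=0$ for $j>n+1$ spells out the truncation it leaves implicit.
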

\begin{proof}
We have
\begin{align*}
s_{n+1}^{(3)} &= \frac{1}{2}r_{n+1}^{(3)} \text{ [By Theorem \ref{thm:Yang2.9}]}\\
&= \frac{1}{2}r_{n+1}^{(3,1,1)} \text{ [By Remark \ref{rem:unweighting}]}\\
&= \frac{1}{2(n+1)}\sum\limits_{j\ge 1}\binom{2(n+1)}{j-1}\binom{n+1}{j}2^j \text{ [By Theorem \ref{thm:Yang2.4}]}\\
&= \frac{1}{n+1}\sum\limits_{r=1}^{n+1}\binom{2n+2}{r-1}\binom{n+1}{r}2^{r-1}. \qedhere
\end{align*}
\end{proof}

In \cite{A034015}, Weiner conjectured the following result, which we prove.

\begin{thm}
For $n\ge 0$, we have
\[s_{n+1}^{(3)}=\frac{1}{n+1}\sum\limits_{i=0}^n\left(-2\right)^{n-i}\binom{n+1}{i}\left(\sum\limits_{j=0}^i\binom{i}{j}\binom{2i+j}{n}\right).
\]
\end{thm}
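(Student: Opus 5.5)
The plan is to turn the inner sum into a coefficient extraction, collapse the outer sum with the binomial theorem, and then compare with Lemma~\ref{lem:intermediate-id}. Since both sides carry the factor $\frac{1}{n+1}$, it suffices to show
\[
\sum_{i=0}^n(-2)^{n-i}\binom{n+1}{i}\sum_{j=0}^i\binom{i}{j}\binom{2i+j}{n}=\sum_{r=1}^{n+1}\binom{2n+2}{r-1}\binom{n+1}{r}2^{r-1}.
\]

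First, I rewrite the inner sum. Since $\binom{2i+j}{n}=[x^n](1+x)^{2i+j}$, we get
\[
\sum_{j}\binom{i}{j}\binom{2i+j}{n}=[x^n](1+x)^{2i}\bigl(1+(1+x)\bigr)^i=[x^n]\,y^i,\qquad y:=(1+x)^2(2+x).
\]

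Next, I sum over $i$. Write $(-2)^{n-i}=-\tfrac12(-2)^{n+1-i}$ and extend the sum to $i=n+1$, compensating with the missing term. This gives
\[
\text{LHS}=-\tfrac12[x^n]\Bigl((y-2)^{n+1}-y^{n+1}\Bigr).
\]
The key observation is that
\[
y-2=(1+2x+x^2)(2+x)-2=x(5+4x+x^2)
\]
is divisible by $x$. Hence $(y-2)^{n+1}$ is divisible by $x^{n+1}$, so its $[x^n]$ coefficient vanishes. We are left with
\[
\text{LHS}=\tfrac12[x^n](1+x)^{2n+2}(2+x)^{n+1}.
\]

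Finally, I expand
\[
(2+x)^{n+1}=\sum_k\binom{n+1}{k}2^{n+1-k}x^k,
\]
which gives $[x^n]=\sum_{k=0}^n\binom{n+1}{k}2^{n+1-k}\binom{2n+2}{n-k}$. Substituting $r=n+1-k$, so that $r$ runs over $1,\dots,n+1$, and using $\binom{n+1}{k}=\binom{n+1}{r}$, turns this into $\sum_{r=1}^{n+1}\binom{n+1}{r}2^r\binom{2n+2}{r-1}$. Multiplying by the factor $\tfrac12$ gives exactly the right-hand side of Lemma~\ref{lem:intermediate-id}, which completes the argument.

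The only delicate point is the bookkeeping in the second step. The sum stops at $i=n$, not $n+1$, so the added term $y^{n+1}$ must be subtracted back, and it is precisely this term that survives. The divisibility of $y-2$ by $x$ is what makes the main term $(y-2)^{n+1}$ disappear.
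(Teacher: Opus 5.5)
Your proposal is correct and matches the paper's proof step for step. Both write the inner sum as $[x^n](1+x)^{2i}(2+x)^i$, complete the binomial sum to $i=n+1$, and use that $(1+x)^2(2+x)-2=x(x^2+4x+5)$ makes the main term's $x^n$ coefficient vanish. Both then reindex with $r=n+1-k$ to reach the expression in Lemma~\ref{lem:intermediate-id}.
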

\begin{proof}

Let $D_n\coloneqq \frac{1}{n+1}\sum\limits_{i=0}^n\left(-2\right)^{n-i}\binom{n+1}{i}\left(\sum\limits_{j=0}^i\binom{i}{j}\binom{2i+j}{n}\right)$.

By Lemma \ref{lem:intermediate-id}, it suffices to show that $D_n=\frac{1}{n+1}\sum\limits_{r=1}^{n+1}\binom{2n+2}{r-1}\binom{n+1}{r}2^{r-1}$.

We have
\begin{align*}
\sum\limits_{j=0}^i\binom{i}{j}\binom{2i+j}{n} &=\sum\limits_{j=0}^i\binom{i}{j}[x^n]\left(1+x\right)^{2i+j}\\
&=[x^n]\left(1+x\right)^{2i}\sum\limits_{j=0}^i \binom{i}{j}\left(1+x\right)^j\\
&=[x^n]\left(1+x\right)^{2i}\left(1+(1+x)\right)^i\\
&=[x^n]\left(1+x\right)^{2i}\left(2+x\right)^i
\end{align*}
and hence
\[
D_n=\frac{1}{n+1}[x^n]\sum\limits_{i=0}^n\left(-2\right)^{n-i}\binom{n+1}{i}\left(1+x\right)^{2i}\left(2+x\right)^i.
\]

Write
\begin{equation}\label{eqn:P(x)}
P(x)\coloneqq \sum\limits_{i=0}^n\left(-2\right)^{n-i}\binom{n+1}{i}\left(\left(1+x\right)^2(2+x)\right)^i.
\end{equation}

Then
\begin{equation}\label{eqn:D_n in 3}
D_n=\frac{1}{n+1}[x^n]P(x).
\end{equation}

Multiplying Equation \eqref{eqn:P(x)} by $-2$, we get
\begin{align*}
-2P(x) &=\sum\limits_{i=0}^n\left(-2\right)^{n+1-i}\binom{n+1}{i}\left(\left(1+x\right)^2(2+x)\right)^i\\
& = \left(\sum\limits_{i=0}^{n+1}\left(-2\right)^{n+1-i}\binom{n+1}{i}\left(\left(1+x\right)^2(2+x)\right)^i\right) -\left(\left(1+x\right)^2(2+x)\right)^{n+1}\\
& = \left(\left(1+x\right)^2(2+x)-2\right)^{n+1}-\left(1+x\right)^{2n+2}\left(2+x\right)^{n+1}\\
& = \left(x^3+4x^2+5x\right)^{n+1}-\left(1+x\right)^{2n+2}\left(2+x\right)^{n+1}\\
& = x^{n+1}\left(x^2+4x+5\right)^{n+1}-\left(1+x\right)^{2n+2}\left(2+x\right)^{n+1}.
\end{align*}

Therefore, $P(x)=-\frac{1}{2}\left(x^{n+1}\left(x^2+4x+5\right)^{n+1}-\left(1+x\right)^{2n+2}\left(2+x\right)^{n+1}\right)$. We wish to calculate $[x^n]P(x)$. Notice that the lowest power of $x$ in $x^{n+1}\left(x^2+4x+5\right)^{n+1}$ is $x^{n+1}$.

This implies that the coefficient of $x^n$ in $x^{n+1}\left(x^2+4x+5\right)^{n+1}$ is zero.

Hence
\begin{align*}
[x^n]P(x) &=\frac{1}{2}[x^n]\left(1+x\right)^{2n+2}\left(2+x\right)^{n+1}\\
&=\frac{1}{2}[x^n]\sum\limits_{k=0}^{n+1}\binom{n+1}{k}2^{n+1-k}\sum\limits_{m=0}^{2n+2}\binom{2n+2}{m}x^{m+k} \\
& =\frac{1}{2} \sum\limits_{k=0}^n \binom{n+1}{k}2^{n+1-k}\binom{2n+2}{n-k}\\
& =\frac{1}{2} \sum\limits_{r=1}^{n+1} \binom{n+1}{n+1-r}2^r\binom{2n+2}{r-1} \text{ [writing } r\coloneqq n+1-k]\\
& =\frac{1}{2} \sum\limits_{r=1}^{n+1} \binom{n+1}{r}\binom{2n+2}{r-1}2^r\\
& = \sum\limits_{r=1}^{n+1} \binom{n+1}{r}\binom{2n+2}{r-1}2^{r-1}.
\end{align*}

From Equation \eqref{eqn:D_n in 3}, we get $D_n=\frac{1}{n+1}\sum\limits_{r=1}^{n+1} \binom{2n+2}{r-1}\binom{n+1}{r}2^{r-1}$, thus concluding the proof.
\end{proof}

\section{Evaluation of a diagonal and an application of MacMahon's master theorem}\label{sec 4}

For a reference on diagonals of formal power series, see \cite[2.1 and 2.4]{ACSV}.

\begin{defn}
Let 
\[f\coloneqq \sum\limits_{k_1\ge 0}\cdots\sum\limits_{k_m\ge 0} a_{k_1,\dots,k_m} x_1^{k_1}\cdots x_m^{k_m} \in \mathbb{C}[[x_1,\dots,x_m]].\]
Then the \emph{diagonal} of $f$ is defined to be the power series $\sum\limits_{k\ge 0} a_{k,\dots,k}x^k\in \mathbb{C}[[x]]$.
\end{defn}

\begin{defn}
Let 
\[f(x_1,\dots, x_m)\coloneqq \frac{P(x_1,\dots,x_m)}{Q(x_1,\dots,x_m)}\in \mathbb{C}(x_1,\dots,x_m)\]
be such that $Q(0,\dots,0)\ne 0$. Then $f$ admits a Taylor series expansion 
\[\sum\limits_{k_1\ge 0}\cdots\sum\limits_{k_m\ge 0} c_{k_1,\dots,k_m} x_1^{k_1}\cdots x_m^{k_m}\]
around $(0,\dots,0)$ \emph{(see \cite[2.1]{ACSV})}. The \emph{diagonal} of $f$ is defined to be the diagonal of the Taylor series expansion of $f$ around $(0,\dots,0)$. We denote the coefficient of $x_1^{\alpha_1}\cdots x_m^{\alpha_m}$ in the Taylor series expansion of $f$ around $(0,\dots,0)$ by $[x_1^{\alpha_1}\cdots x_m^{\alpha_m}]f$.
\end{defn}



In \cite{A002897}, Bala conjectured the following result, which we prove.

\begin{thm}\label{thm: GF3}
For $n\ge 0$, we have
\begin{align*}
\binom{2n}{n}^3 &= [\left(xyzt\right)^n]\frac{1}{(1-(x+y)(1-4zt)-z-t)}\\
&= [\left(xyzt\right)^n]\frac{1}{\det(I-\mathrm{diag}(x,y,z,t)M)},    
\end{align*}
where $\mathrm{diag}(x,y,z,t)=\begin{bmatrix}
   x & 0 & 0 & 0\\
   0 & y & 0 & 0\\
   0 & 0 & z & 0\\
   0 & 0 & 0 & t
\end{bmatrix}$ and
$M=\begin{bmatrix}
   1 & 1 & 1 & 1\\
   1 & 1 & 1 & 1\\
   1 & 1 & 1 & -1\\
   1 & 1 & -1 & 1
\end{bmatrix}$.
\end{thm}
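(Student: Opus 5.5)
Two things need to be shown: that the two rational functions coincide, and that the diagonal coefficient equals $\binom{2n}{n}^3$.

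\textbf{The determinant.} I would compute $\det(I-\mathrm{diag}(x,y,z,t)M)$ directly. Rows 1 and 2 of $M$ are both $(1,1,1,1)$, so $\mathrm{diag}(x,y,z,t)M$ has its first two rows equal to $x$ and $y$ times the all-ones row. Since the determinant is multilinear in $x$ and $y$, with no $xy$ term because two proportional rows appear, one can expand in the first two rows. Equivalently, apply the matrix determinant lemma to the rank-structured part. A short cofactor computation should give
\[
1-x-y-z-t+4xzt+4yzt,
\]
which equals $1-(x+y)(1-4zt)-z-t$. I expect this to be routine.

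\textbf{The coefficient extraction.} Set $u=x+y$ and expand
\[
\frac{1}{1-u(1-4zt)-z-t}=\sum_{m\ge 0}\bigl(u(1-4zt)+z+t\bigr)^m .
\]
Because $x$ and $y$ enter only through $u$, the coefficient of $x^ny^n$ in a power $u^{2n}$ is $\binom{2n}{n}$. So I only need the terms with exactly $2n$ factors of $u$, which gives
\[
[(xyzt)^n]F=\binom{2n}{n}\,[u^{2n}z^nt^n]\,\frac{1}{1-u(1-4zt)-z-t}.
\]

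Next, extract $[u^{2n}]$. Write
\[
\frac{1}{1-z-t-u(1-4zt)}=\sum_k \frac{u^k(1-4zt)^k}{(1-z-t)^{k+1}},
\]
so the remaining task is to show
\[
[z^nt^n]\,\frac{(1-4zt)^{2n}}{(1-z-t)^{2n+1}}=\binom{2n}{n}^2 .
\]

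\textbf{The main obstacle.} This last identity is where I expect the real work. My plan is as follows.
\begin{itemize}
\item Expand $(1-4zt)^{2n}=\sum_j \binom{2n}{j}(-4)^j(zt)^j$.
\item Use $[z^at^a](1-z-t)^{-(2n+1)}=\binom{2n+2a}{2n,\,a,\,a}$, the multinomial coefficient.
\item This reduces the claim to the single-sum identity
\[
\sum_j \binom{2n}{j}(-4)^j\frac{(4n-2j)!}{(2n)!\,((n-j)!)^2}=\binom{2n}{n}^2 .
\]
\end{itemize}

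I would first try to prove this sum with Zeilberger's algorithm, or by a generating-function argument. For the latter, take the diagonal of $1/(1-z-t)^{2n+1}$, which is a power of $1/\sqrt{1-4w}$ times a rational factor, and then apply a residue or substitution trick in $w=zt$.

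As an alternative route, MacMahon's master theorem (which the section title hints at) could be used instead: the coefficient equals the coefficient of $(xyzt)^n$ in $\prod_i(\sum_j M_{ij}x_j)^n$, namely
\[
(x+y+z+t)^{2n}\,(x+y+z-t)^n\,(x+y-z+t)^n .
\]
That could then be reduced with similar binomial manipulations to $\binom{2n}{n}^3$.
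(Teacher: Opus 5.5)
\textbf{Verdict: genuine gap.} Your reduction is correct, but the one sum that carries all the content is left unevaluated.

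\textbf{What is correct.} The determinant is indeed $1-x-y-z-t+4xzt+4yzt$; the paper itself never checks this. Your observation that $x,y$ enter only through $u=x+y$ is a real shortcut. It produces the factor $\binom{2n}{n}$ at once. The paper instead expands a six-term multinomial, solves the exponent constraints, and needs the convolution $\sum_s\binom{n+r}{s+r}\binom{n-r}{s}=\binom{2n}{n}$ to pull out the same factor. After substituting $r=n-j$, your remaining sum is exactly the paper's
\[
S_n=\sum_{r=0}^{n}\binom{2n+2r}{n+r}\binom{n+r}{n}\binom{n}{r}(-4)^{n-r}.
\]

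\textbf{The gap.} You never evaluate $S_n$, and proving $S_n=\binom{2n}{n}^2$ is the whole nontrivial content of the theorem.
\begin{itemize}
\item Saying you would try Zeilberger's algorithm is not an argument until you produce the recurrence and certificate and check the initial values.
\item The generating-function sketch writes the diagonal of $(1-z-t)^{-2n-1}$ as $(1-4w)^{-2n-1/2}$ times a polynomial. That polynomial is left unspecified, and the ``residue or substitution trick'' is never given. No concrete step in it yields $\binom{2n}{n}^2$.
\item The MacMahon route only restates the problem as another unevaluated coefficient. That coefficient identity is the paper's corollary, which is derived from this theorem rather than used to prove it.
\end{itemize}

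\textbf{How to close it.} The missing step is short.
\begin{itemize}
\item Write $\binom{2n+2r}{n+r}=(-4)^{n+r}\binom{-1/2}{n+r}$, so that $S_n=16^n\sum_r\binom{-1/2}{n+r}\binom{n+r}{n}\binom{n}{r}$.
\item Apply trinomial revision: $\binom{-1/2}{n+r}\binom{n+r}{n}=\binom{-1/2}{n}\binom{-n-1/2}{r}$.
\item Vandermonde convolution then gives $\sum_r\binom{-n-1/2}{r}\binom{n}{n-r}=\binom{-1/2}{n}$.
\item Hence $S_n=16^n\binom{-1/2}{n}^2=\binom{2n}{n}^2$.
\end{itemize}
This is how the paper finishes. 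With it inserted, your argument becomes a complete and somewhat shorter proof than the paper's.
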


\begin{proof}
Let
\[
C_n \coloneqq [\left(xyzt\right)^n]\frac{1}{(1-(x+y)(1-4zt)-z-t)}.
\]

We have that the Taylor series expansion of 
\[\frac{1}{(1-(x+y)(1-4zt)-z-t)} \in \mathbb{C}(x,y,z,t)\]
around $(0,0,0,0)$ is
\begin{align*}
&\sum\limits_{k\ge 0} \left(x+y+z+t-4xzt-4yzt\right)^k\\
&=\sum\limits_{k\ge 0}\sum\limits_{\substack{p_1,\dots, p_6 \ge 0 \\ p_1+\cdots+p_6=k}} \frac{k!}{p_1!\cdots p_6!}x^{p_1}y^{p_2}z^{p_3}t^{p_4}\left(-4xzt\right)^{p_5}\left(-4yzt\right)^{p_6}\\
&=\sum\limits_{k\ge 0}\sum\limits_{\substack{p_1,\dots, p_6 \ge 0 \\ p_1+\cdots+p_6=k}} \frac{k!}{p_1!\cdots p_6!}\left(-4\right)^{p_5+p_6}x^{p_1+p_5}y^{p_2+p_6}z^{p_3+p_5+p_6}t^{p_4+p_5+p_6}.
\end{align*}

Hence
\begin{align*}
C_n &= [\left(xyzt\right)^n] \sum\limits_{k\ge 0}\sum\limits_{\substack{p_1,\dots, p_6 \ge 0 \\ p_1+\cdots+p_6=k}} \frac{k!}{p_1!\cdots p_6!}\left(-4\right)^{p_5+p_6}x^{p_1+p_5}y^{p_2+p_6}z^{p_3+p_5+p_6}t^{p_4+p_5+p_6}\\
&= \sum\limits_{k\ge 0}\sum\limits_{\substack{p_1,\dots, p_6 \ge 0 \\ p_1+\cdots+p_6=k\\ p_1+p_5 = n\\
p_2+p_6 = n\\
p_3+p_5+p_6 = n\\
p_4+p_5+p_6 = n}} \frac{k!}{p_1!\cdots p_6!}\left(-4\right)^{p_5+p_6}.
\end{align*}


From the equations satisfied by $(p_1,\dots,p_6)$, it follows that $p_3=p_4=p_1+p_2-n$, $p_5=n-p_1$, and $p_6=n-p_2$.

Using the fact that $p_1+\cdots+p_6=k$, we get $p_1+p_2+2(p_1+p_2-n)+2n-(p_1+p_2)=k$, which gives $p_1+p_2=\frac{k}{2}$. Thus each $p_i$ can be expressed in terms of $p_1$ and $k$, and additionally, $k$ is forced to be even since $\frac{k}{2}=p_1+p_2$ is an integer. If we let $k=2m$, then the outer sum runs over $m\ge 0$. We can write $p_2=m-p_1$.

The tuple $(p_1,\dots,p_6)$ is thus equal to $(p_1,m-p_1,m-n,m-n,n-p_1,n-m+p_1)$. 

We therefore obtain that
\[
C_n = \sum\limits_{m\ge 0}\sum\limits_{\substack{p_1 \ge 0 \\ m-p_1\ge 0 \\ m-n\ge 0\\ n-p_1 \ge 0\\ n-m+p_1\ge 0}} \frac{(2m)!}{p_1!(m-p_1)!(m-n)!(m-n)!(n-p_1)!(n-m+p_1)!}\left(-4\right)^{2n-m} .
\]

We henceforth write $p$ instead of $p_1$.

The constraints on the inner sum give us that $0\le m-n\le p\le n$. This implies that the outer sum runs from $m=n$ to $m=2n$, and the inner sum runs from $p=m-n$ to $p=n$.

On rewriting, we obtain
\[
C_n = \sum\limits_{m=n}^{2n}\sum\limits_{p=m-n}^n \frac{(2m)!}{p!(m-p)!(m-n)!(m-n)!(n-p)!(n-m+p)!}\left(-4\right)^{2n-m}.
\]

Multiplying the numerator and denominator by $m!(2n-m)!$, we get
\begin{align*}
C_n &= \sum\limits_{m=n}^{2n} \frac{(2m)!}{(m-n)!(m-n)!m!(2n-m)!}\left(-4\right)^{2n-m} \sum\limits_{p=m-n}^n \frac{m!(2n-m)!}{p!(m-p)!(n-p)!(n-m+p)!}\\
&= \sum\limits_{m=n}^{2n} \frac{(2m)!}{(m-n)!(m-n)!m!(2n-m)!}\left(-4\right)^{2n-m} \sum\limits_{p=m-n}^n \binom{m}{p}\binom{2n-m}{n-m+p}.
\end{align*}

We now write $r\coloneqq m-n$, giving
\[
C_n = \sum\limits_{r=0}^n \frac{(2n+2r)!}{r!r!(n+r)!(n-r)!}\left(-4\right)^{n-r} \sum\limits_{p=r}^n \binom{n+r}{p}\binom{n-r}{p-r}.
\]

Writing $s\coloneqq p-r$, we get
\[
C_n = \sum\limits_{r=0}^n \frac{(2n+2r)!}{r!r!(n+r)!(n-r)!}\left(-4\right)^{n-r} \sum\limits_{s=0}^{n-r} \binom{n+r}{s+r}\binom{n-r}{s}.
\]

By \cite[(3.20)]{Gould}, we have 
\[\sum\limits_{s=0}^{n-r} \binom{n+r}{s+r}\binom{n-r}{s}=\binom{2n}{n},\]
which yields
\[
C_n = \binom{2n}{n}\sum\limits_{r=0}^n \frac{(2n+2r)!}{r!r!(n+r)!(n-r)!}\left(-4\right)^{n-r}.
\]

Write 
\[S_n\coloneqq \frac{C_n}{\binom{2n}{n}} = \sum\limits_{r=0}^n \frac{(2n+2r)!}{r!r!(n+r)!(n-r)!}\left(-4\right)^{n-r}.\]

Multiplying the numerator and denominator of $S_n$ by $(n+r)!n!$, we get
\[
S_n = \sum\limits_{r=0}^n \frac{(2n+2r)!}{(n+r)!^2}\frac{(n+r)!}{n!r!}\frac{n!}{r!(n-r)!}\left(-4\right)^{n-r} =
\sum\limits_{r=0}^n \binom{2n+2r}{n+r}\binom{n+r}{n}\binom{n}{r}\left(-4\right)^{n-r}.
\]

Using the identity \cite[(1.10)]{ECombi}, we rewrite the above as
\[
S_n = \sum\limits_{r=0}^n \left(-4\right)^{n+r}\binom{-1/2}{n+r}\binom{n+r}{n}\binom{n}{r}\left(-4\right)^{n-r} = 4^{2n} \sum\limits_{r=0}^n \binom{-1/2}{n+r}\binom{n+r}{n}\binom{n}{r}.
\]
By \cite[(5.21)]{ConcMath}, we have 
\[
\binom{-1/2}{n+r}\binom{n+r}{n}=\binom{-1/2}{n}\binom{-n-\frac{1}{2}}{r}.
\]

Using this gives us
\begin{align*}
S_n &= 4^{2n} \binom{-1/2}{n} \sum\limits_{r=0}^n \binom{-n-\frac{1}{2}}{r}\binom{n}{r}\\
&= 4^{2n} \binom{-1/2}{n} \sum\limits_{r=0}^n \binom{-n-\frac{1}{2}}{r}\binom{n}{n-r}\\
&= 4^{2n} \binom{-1/2}{n}\binom{-1/2}{n},
\end{align*}
where the last equality follows from Vandermonde convolution \cite[(3.1)]{Gould}.

Again using \cite[(1.10)]{ECombi}, we have that 
\[4^{2n} \binom{-1/2}{n}\binom{-1/2}{n}=\binom{2n}{n}^2.\]

We therefore obtain $S_n = \binom{2n}{n}^2$ and hence $C_n = \binom{2n}{n}S_n = \binom{2n}{n}^3$, thereby proving the conjecture.
\end{proof}

The following result will be used to prove a corollary of Theorem \ref{thm: GF3}:

\begin{thm}[MacMahon's master theorem]\cite{MMTshort}\label{thm: MMT}
Let $A=(a_{ij})_{1\le i,j\le m}$ be an $m\times m$ matrix over $\mathbb{C}$. Let $k_1,\dots,k_m\in \mathbb{Z}_{\ge 0}$. Define $L_i(x_1,\dots,x_m)\coloneqq \sum\limits_{j=1}^m a_{ij}x_j$ for $1\le i\le m$. Let $T\coloneqq \mathrm{diag}(x_1,\dots,x_m)$. Then
\[
[x_1^{k_1}\cdots x_m^{k_m}] \prod\limits_{i=1}^m L_i(x_1,\dots,x_m)^{k_i} = [x_1^{k_1}\cdots x_m^{k_m}] \frac{1}{\det(I-TA)}.
\]
\end{thm}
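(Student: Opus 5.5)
The plan is to prove the identity of Theorem~\ref{thm: MMT} for all $(k_1,\dots,k_m)$ at once. I will compare generating functions in auxiliary variables $y_1,\dots,y_m$ and evaluate the left-hand generating function by a multivariate Cauchy integral with a linear change of variables. Write $Y\coloneqq \mathrm{diag}(y_1,\dots,y_m)$ and $c_{k}\coloneqq [x_1^{k_1}\cdots x_m^{k_m}]\prod_i L_i^{k_i}$. The target is
\[
\sum_{k_1,\dots,k_m\ge 0} c_k\, y_1^{k_1}\cdots y_m^{k_m} \;=\; \frac{1}{\det(I-YA)} .
\]
Once this is established, substituting $y_i\mapsto x_i$ on both sides gives the theorem. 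The substitution is legitimate because $\det(I-YA)$ is a polynomial in which every $y_i$ plays exactly the role of $x_i$ in $\det(I-TA)$. Moreover, the coefficient of $y^k$ on the left is $c_k$, which is also the coefficient of $x^k$ in $\prod_i L_i^{k_i}$ by definition.

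First, write $c_k$ as an integral over the torus $|x_i|=1$:
\[
c_k=\frac{1}{(2\pi i)^m}\oint \prod_{i=1}^m \Bigl(\frac{L_i(x)}{x_i}\Bigr)^{k_i}\frac{dx_1\cdots dx_m}{x_1\cdots x_m}.
\]
Fix $y$ with $|y_i|<\varepsilon$, where $\varepsilon$ is small enough that $|y_iL_i(x)|\le \tfrac12|x_i|$ on the torus; $\varepsilon=1/(2\max_i\sum_j|a_{ij}|)$ works. Then the geometric series $\sum_{k_i}(y_iL_i/x_i)^{k_i}$ converges uniformly on the torus. Multiplying by $y^k$, summing, and interchanging sum and integral gives
\[
\sum_k c_k y^k=\frac{1}{(2\pi i)^m}\oint \prod_{i=1}^m\frac{1}{x_i-y_iL_i(x)}\,dx_1\cdots dx_m
=\frac{1}{(2\pi i)^m}\oint \frac{dx}{\prod_i F_i(x)},
\]
where $F(x)\coloneqq (I-YA)x$ is a linear map that is invertible for small $y$.

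Second, evaluate this integral as $1/\det(I-YA)$ by substituting $u=F(x)$. This gives $du=\det(I-YA)\,dx$, and the integral becomes $\det(I-YA)^{-1}(2\pi i)^{-m}\oint_{F(\text{torus})} du/(u_1\cdots u_m)$. It therefore remains to show that the cycle $F(\text{torus})$ is homologous to the standard torus in $(\mathbb{C}^*)^m$, where $du/\prod u_i$ is closed. I would use the homotopy $F_s\coloneqq I-sYA$ for $s\in[0,1]$. On the torus, each component satisfies $|(F_s x)_i|\ge |x_i|-|y_iL_i(x)|\ge \tfrac12$, so no coordinate vanishes during the homotopy, and the integral of the closed form is unchanged. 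At $s=0$ the integral is $(2\pi i)^m$. Hence the integral equals $1/\det(I-YA)$ for all small $y$.

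Since both sides are analytic near $y=0$ and agree there, their Taylor coefficients agree, which proves the theorem. I expect the main obstacle to be the justification in the second step: making the change of variables and the homotopy invariance rigorous, namely that $du/\prod u_i$ is closed on $(\mathbb{C}^*)^m$, that homotopic $m$-cycles there give equal integrals, and that the uniform bound $|y_iL_i|\le\tfrac12|x_i|$ keeps the whole homotopy inside $(\mathbb{C}^*)^m$. The interchange of sum and integral in the first step is routine by uniform convergence.
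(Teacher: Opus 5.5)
\textbf{No proof in the paper to compare against.} The paper does not prove this statement. MacMahon's master theorem is quoted from \cite{MMTshort} and used only as a black box in the corollary following Theorem~\ref{thm: GF3}.

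\textbf{Your argument is correct and complete.} Taken on its own, each step holds up:
\begin{itemize}
\item The reformulation $\sum_k c_k y^k=1/\det(I-YA)$ is equivalent to the statement.
\item Your bound $|y_iL_i(x)|<\tfrac12$ on the torus justifies exchanging sum and integral. It also gives $|c_ky^k|\le 2^{-(k_1+\cdots+k_m)}$. So $\sum_k c_ky^k$ converges absolutely on the polydisc, and its coefficients are the Taylor coefficients of $1/\det(I-YA)$, which is how the paper defines $[x^k]$ of a rational function.
\item The step you flag as the main obstacle works as you propose. The form $\omega=du_1\wedge\cdots\wedge du_m/(u_1\cdots u_m)$ is a holomorphic top-degree form on $(\mathbb{C}^*)^m$, hence closed. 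Because $F$ is complex linear, $F^*\omega=\det(I-YA)\,dx_1\wedge\cdots\wedge dx_m/\prod_iF_i(x)$. Stokes' theorem on $[0,1]\times T^m$, applied to the pullback of $\omega$ under $H(s,x)=(I-sYA)x$, gives $\oint_{T^m}F^*\omega=(2\pi i)^m$. Your bound keeps $H$ inside $(\mathbb{C}^*)^m$ throughout.
\item Invertibility of $I-YA$ needs no separate "small $y$" argument. Since $\max_i|y_i|\sum_j|a_{ij}|<\tfrac12$, we have $\|YA\|_\infty<1$. The resulting identity also forces $\det(I-YA)\neq0$ directly.
\end{itemize}

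\textbf{How your route compares with the standard one.} The classical short proof (Good's) is purely formal: it obtains the theorem as a special case of the multivariate Lagrange inversion formula. You replace that black box with Cauchy's integral formula and a direct linear change of variables justified by Stokes' theorem.

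Your second step is in the spirit of the Jacobi residue formula used in \S\ref{sec 5}. However, Theorem~\ref{thm: Jacobi residue} as stated there would not apply directly. Once $A$ has off-diagonal entries, the linear forms $x_i-y_iL_i(x)$ are not a monomial times a power series with nonzero constant term. Your homotopy argument sidesteps this.

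Working over $\mathbb{C}$ loses nothing. For fixed $k$, both sides are polynomials in the $a_{ij}$ with integer coefficients. So your analytic proof also yields the identity over any commutative ring, which is the generality the formal proof gives.
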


The following result was also conjectured by Bala in \cite{A002897}. According to a comment by Stephan in \cite{A002897}, it was subsequently proved by an autonomous AI agent, with a Lean formalisation of this proof being linked to in \cite{A002897}. Here we present a distinct and independently obtained proof, deriving the result as a direct corollary of Theorem \ref{thm: GF3}.

\begin{cor}
For $n\ge 0$, we have
\[\binom{2n}{n}^3=[\left(xyz\right)^n]\left(1+x+y+z\right)^{2n}\left(1+x+y-z\right)^n\left(1+x-y+z\right)^n.\]
\end{cor}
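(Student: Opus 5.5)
The plan is to apply MacMahon's master theorem (Theorem \ref{thm: MMT}) to the matrix $M$ from Theorem \ref{thm: GF3}, and then remove one variable using homogeneity. Take $m=4$, variables $(x_1,x_2,x_3,x_4)=(x,y,z,t)$, $A=M$ and $k_1=k_2=k_3=k_4=n$. Then $T=\mathrm{diag}(x,y,z,t)$, and the linear forms are read off from the rows of $M$:
\[
L_1=L_2=x+y+z+t,\qquad L_3=x+y+z-t,\qquad L_4=x+y-z+t.
\]
Theorem \ref{thm: MMT} then gives
\[
[(xyzt)^n]\,(x+y+z+t)^{2n}(x+y+z-t)^n(x+y-z+t)^n=[(xyzt)^n]\frac{1}{\det(I-\mathrm{diag}(x,y,z,t)M)}.
\]
By the second equality in Theorem \ref{thm: GF3}, the right-hand side equals $\binom{2n}{n}^3$.

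It then remains to identify the left-hand side with the coefficient in the statement. The polynomial $F(x,y,z,t)\coloneqq(x+y+z+t)^{2n}(x+y+z-t)^n(x+y-z+t)^n$ is homogeneous of total degree $4n$. Hence every monomial $x^ay^bz^ct^d$ appearing in it has $a=4n-b-c-d$. Consequently, the coefficient of $y^nz^nt^n$ in $F(1,y,z,t)$ collects only the monomial with $a=n$, so
\[
[(xyzt)^n]F=[(yzt)^n]F(1,y,z,t)=[(yzt)^n]\,(1+y+z+t)^{2n}(1+y+z-t)^n(1+y-z+t)^n.
\]
Renaming $(y,z,t)\mapsto(x,y,z)$ yields exactly $[(xyz)^n](1+x+y+z)^{2n}(1+x+y-z)^n(1+x-y+z)^n$, which completes the argument.

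No step here is a real obstacle, since the substantive work is already contained in Theorem \ref{thm: GF3}. The only points needing care are bookkeeping ones. First, the sign pattern of rows $3$ and $4$ of $M$ must be matched correctly to the factors $(1+x+y-z)$ and $(1+x-y+z)$ after the renaming. Second, the dehomogenization step must be justified, which it is because the polynomial is homogeneous.
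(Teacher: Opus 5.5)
Your proof is correct and is essentially the paper's argument: MacMahon's master theorem (Theorem~\ref{thm: MMT}) applied to $M$ with all $k_i=n$, combined with the second equality of Theorem~\ref{thm: GF3} and a (de)homogenization step. The only cosmetic difference is that the paper homogenizes by putting $t$ in place of $1$, which leaves an implicit permutation of variables to match the rows of $M$. Your choice of setting $x=1$ matches the sign pattern directly, so no permutation is needed.
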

\begin{proof}
Let
\[
D_n \coloneqq [(xyz)^n]\left(1+x+y+z\right)^{2n}\left(1+x+y-z\right)^n\left(1+x-y+z\right)^n.
\]

By homogenisation, we have
\[
D_n =[\left(xyzt\right)^n]\left(t+x+y+z\right)^{2n}\left(t+x+y-z\right)^n\left(t+x-y+z\right)^n.
\]

By Theorem \ref{thm: MMT}, we therefore have
\[
D_n = [\left(xyzt\right)^n]\frac{1}{\det(I-\mathrm{diag}(x,y,z,t)M)},
\]
and hence, using Theorem \ref{thm: GF3}, the proof is complete.
\end{proof}

\section{An application of Jacobi's residue formula} \label{sec 5}

In \cite{A002897}, Bala conjectured that 
\[
\binom{2n}{n}^3=[\left(xyz\right)^{2n}]\left(1+xy+xz-yz\right)^{2n}\left(1+xy-xz+yz\right)^{2n}\left(1-xy+xz+yz\right)^{2n}.
\]
We prove this conjecture.

\begin{rem}
Let $P_0\coloneqq \{f\in  \mathbb{C}[[x_1,\dots,x_m]]:f\text{ has constant term zero}\}$. We define $\exp: P_0\to \mathbb{C}[[x_1,\dots,x_m]]$ by $f\mapsto \sum\limits_{n\ge 0}\frac{1}{n!}f^n$. For a reference on exponentiation in formal power series rings, see \cite{FormalExp}.
\end{rem}

Some results that will help us:

\begin{lem}\label{lem: c-duality}
Let $A=\left(a_{ij}\right)_{1\le i,j\le m}$ be a symmetric $m\times m$ matrix over $\mathbb{C}$. Let $\alpha_1,\dots,\alpha_m$ and $\beta_1,\dots,\beta_m$ be non-negative integers. Define $L_i(x_1,\dots,x_m)\coloneqq \sum\limits_{j=1}^m a_{ij}x_j$ for $1\le i\le m$. Then,
\[
\frac{[x_1^{\alpha_1}\cdots x_m^{\alpha_m}]\prod\limits_{i=1}^m L_i(x_1,\dots,x_m)^{\beta_i}}{\beta_1!\cdots \beta_m!} = \frac{[x_1^{\beta_1}\cdots x_m^{\beta_m}]\prod\limits_{i=1}^m L_i(x_1,\dots,x_m)^{\alpha_i}}{\alpha_1!\cdots \alpha_m!}.
\]
\end{lem}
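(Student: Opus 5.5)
The plan is to realise both sides as the coefficients of a single symmetric bilinear exponential generating function. This is the route suggested by the preceding remark on $\exp$ in formal power series rings. Introduce a second set of indeterminates $y_1,\dots,y_m$ and consider the formal power series
\[
F(x,y)\coloneqq \exp\Bigl(\sum_{i=1}^m y_i L_i(x_1,\dots,x_m)\Bigr)=\exp\Bigl(\sum_{i,j} a_{ij}\,y_i x_j\Bigr)\in\mathbb{C}[[x_1,\dots,x_m,y_1,\dots,y_m]].
\]
The exponent has zero constant term, so $F$ is well defined. We will extract $[x^\alpha y^\beta]F$ in two ways.

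\textbf{First extraction.} Since the exponent is a sum of commuting terms, $F=\prod_i \exp(y_iL_i)$ (the usual formal identity $\exp(f+g)=\exp(f)\exp(g)$). Expanding each factor gives
\[
F=\sum_{\beta}\frac{y_1^{\beta_1}\cdots y_m^{\beta_m}}{\beta_1!\cdots\beta_m!}\prod_i L_i^{\beta_i}.
\]
Each $\prod_i L_i^{\beta_i}$ is homogeneous in the $x$'s, so only finitely many terms contribute to any fixed monomial. Hence
\[
[x^\alpha y^\beta]F=\frac{[x^\alpha]\prod_i L_i^{\beta_i}}{\beta_1!\cdots\beta_m!}.
\]

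\textbf{Second extraction.} Use the symmetry $a_{ij}=a_{ji}$ to rewrite the exponent as $\sum_j x_j\sum_i a_{ji}y_i=\sum_j x_j L_j(y_1,\dots,y_m)$. The same expansion with the roles of $x$ and $y$ swapped then gives
\[
[x^\alpha y^\beta]F=\frac{[y^\beta]\prod_j L_j(y)^{\alpha_j}}{\alpha_1!\cdots\alpha_m!}.
\]
Renaming the variables $y$ as $x$ and equating the two expressions yields the lemma.

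The only step needing care is the justification of the formal manipulations: the multiplicativity of $\exp$ on commuting series with zero constant term, and the legitimacy of extracting coefficients termwise from the infinite sum. I expect this to be routine, citing \cite{FormalExp} and the homogeneity/finiteness observation above, rather than a genuine obstacle.

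As a fallback, one could argue purely combinatorially. Expanding $\prod_i L_i^{\beta_i}$ by the multinomial theorem shows that the coefficient of $x^\alpha$, divided by $\prod_i\beta_i!$, equals
\[
\sum_{N}\ \prod_{i,j}\frac{a_{ij}^{N_{ij}}}{N_{ij}!}
\]
over nonnegative integer matrices $N$ with row sums $\beta$ and column sums $\alpha$. Transposing $N$ and using $a_{ij}=a_{ji}$ gives the same expression with $\alpha$ and $\beta$ exchanged, which is exactly the other side.
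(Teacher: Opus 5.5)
Your proposal is correct and is essentially the paper's proof: the paper also forms $\exp\bigl(\sum_{i,j} x_i a_{ij} y_j\bigr)$, expands it once as $\prod_i \exp(x_i L_i(y))$ and once, after using $a_{ij}=a_{ji}$, as $\prod_j \exp(y_j L_j(x))$, and compares the coefficient of $x^\alpha y^\beta$. The only difference is that you swap which set of variables plays which role. Your fallback argument, which counts nonnegative integer matrices $N$ with prescribed row and column sums and then transposes, is also valid and gives a direct finite proof that avoids formal exponentials.
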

\begin{proof}
Consider the formal power series ring $\mathbb{C}[[x_1,\dots,x_m,y_1,\dots,y_m]]$.

Write $x\coloneqq (x_1,\dots, x_m)$ and $y\coloneqq (y_1,\dots, y_m)$.

For $\gamma=(\gamma_1,\dots,\gamma_m)\in \mathbb{Z}^m$, say that $\gamma\ge 0$ if $\gamma_i\ge 0$ for each $1\le i\le m$, and write $x^\gamma$ for the monomial $x_1^{\gamma_1}\cdots x_m^{\gamma_m}$. 

We have
\[
\sum\limits_{i=1}^m \sum\limits_{j=1}^m x_i a_{ij}y_j = \sum\limits_{i=1}^m x_i \sum\limits_{j=1}^m a_{ij}y_j = \sum\limits_{i=1}^m x_i L_i(y),
\]

and therefore

\[
\exp(\sum\limits_{i=1}^m \sum\limits_{j=1}^m x_i a_{ij}y_j) = \exp(\sum\limits_{i=1}^m x_i L_i(y)) = \prod\limits_{i=1}^m \exp(x_iL_i(y)) = \prod\limits_{i=1}^m \sum\limits_{\gamma_i \ge 0} \frac{x_i^{\gamma_i}L_i(y)^{\gamma_i}}{\gamma_i!}.
\]

Writing $\gamma\coloneqq (\gamma_1,\dots,\gamma_m)$, the above expression can be rewritten as
\[
\sum\limits_{\gamma\ge 0} \frac{x_1^{\gamma_1}\cdots x_m^{\gamma_m}}{\gamma_1!\cdots \gamma_m!} \prod\limits_{i=1}^m L_i(y)^{\gamma_i} = \sum\limits_{\gamma\ge 0} \frac{x^\gamma}{\gamma_1!\cdots \gamma_m!} \prod\limits_{i=1}^m L_i(y)^{\gamma_i}.
\]

Let $\alpha\coloneqq (\alpha_1,\dots,\alpha_m)\ge 0$ and $\beta\coloneqq (\beta_1,\dots,\beta_m)\ge 0$.

We have
\begin{align*}
[x^\alpha y^\beta] \exp(\sum\limits_{i=1}^m \sum\limits_{j=1}^m x_i a_{ij}y_j) &= [x^\alpha y^\beta] \sum\limits_{\gamma\ge 0} \frac{x^\gamma}{\gamma_1!\cdots \gamma_m!} \prod\limits_{i=1}^m L_i(y)^{\gamma_i}\\
&= [x^\alpha y^\beta] \frac{x^\alpha}{\alpha_1!\cdots \alpha_m!} \prod\limits_{i=1}^m L_i(y)^{\alpha_i}\\
&= [y^\beta] \frac{\prod\limits_{i=1}^m L_i(y)^{\alpha_i}}{\alpha_1!\cdots \alpha_m!}.
\end{align*}

Since $A$ is symmetric, we have $a_{ij}=a_{ji}$, and hence
\[
\sum\limits_{i=1}^m \sum\limits_{j=1}^m x_i a_{ij}y_j =
\sum\limits_{i=1}^m \sum\limits_{j=1}^m x_i a_{ji}y_j =
\sum\limits_{j=1}^m \sum\limits_{i=1}^m x_i a_{ji}y_j =
\sum\limits_{j=1}^m y_j \sum\limits_{i=1}^m a_{ji}x_i =
\sum\limits_{j=1}^m y_j L_j(x).
\]

We have
\begin{align*}
[y^\beta] \frac{\prod\limits_{i=1}^m L_i(y)^{\alpha_i}}{\alpha_1!\cdots \alpha_m!} &= [x^\alpha y^\beta] \exp(\sum\limits_{i=1}^m \sum\limits_{j=1}^m x_i a_{ij}y_j)\\
&= [x^\alpha y^\beta] \exp(\sum\limits_{j=1}^m y_j L_j(x))\\
&=[x^\alpha y^\beta] \prod\limits_{j=1}^m \exp(y_jL_j(x))\\
&=[x^\alpha y^\beta] \prod\limits_{j=1}^m \sum\limits_{\gamma_j \ge 0} \frac{y_j^{\gamma_j}L_j(x)^{\gamma_j}}{\gamma_j!}\\
&= [x^\alpha y^\beta] \sum\limits_{\gamma\ge 0} \frac{y_1^{\gamma_1}\cdots y_m^{\gamma_m}}{\gamma_1!\cdots \gamma_m!} \prod\limits_{j=1}^m L_j(x)^{\gamma_j}\\
&=[x^\alpha y^\beta] \sum\limits_{\gamma\ge 0} \frac{y^\gamma}{\gamma_1!\cdots \gamma_m!} \prod\limits_{j=1}^m L_j(x)^{\gamma_j}\\
&= [x^\alpha y^\beta] \frac{y^\beta}{\beta_1!\cdots \beta_m!} \prod\limits_{j=1}^m L_j(x)^{\beta_j}\\
&= [x^\alpha] \frac{\prod\limits_{j=1}^m L_j(x)^{\beta_j}}{\beta_1!\cdots \beta_m!}. \qedhere
\end{align*}
\end{proof}

\begin{thm}[Jacobi's residue formula]\cite[Theorem~3]{GesselResidue}\label{thm: Jacobi residue}
Let $f_i(x_1,\dots,x_m)$ $(1\le i\le m)$ be Laurent series and $p_{ij}$ $(1\le i,j\le m)$ be integers such that
\[\frac{f_i(x_1,\dots,x_m)}{x_1^{p_{i1}}\cdots x_m^{p_{im}}}\]
are formal power series with nonzero constant term. Suppose that $\Phi(t_1,\dots,t_m)$ is a Laurent series. Let
\[\frac{\partial(f_1,\dots, f_m)}{\partial(x_1,\dots,x_m)}\coloneqq \det\left(\frac{\partial f_i}{\partial x_j}\right)_{1\le i,j\le m} \text{ (the Jacobian determinant) }\]
and $A\coloneqq (p_{ij})_{1\le i,j\le m}$. Then
\[
[x_1^{-1}\cdots x_m^{-1}] \frac{\partial(f_1,\dots, f_m)}{\partial(x_1,\dots,x_m)} \Phi(f_1,\dots,f_m) = \det(A) [t_1^{-1}\cdots t_m^{-1}] \Phi(t_1,\dots,t_m).
\]
\end{thm}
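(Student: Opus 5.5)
Since this is Jacobi's residue formula as cited from \cite[Theorem~3]{GesselResidue}, the plan is to reprove it in the purely formal setting by reducing to monomials $\Phi = t_1^{k_1}\cdots t_m^{k_m}$. We assume, as in \cite{GesselResidue}, that the substitution $\Phi(f_1,\dots,f_m)$ is a well-defined Laurent series, for instance with respect to a total order on monomials compatible with $A$. Both sides are then linear and continuous in $\Phi$, so it suffices to treat a single monomial. The right-hand side is $\det(A)$ when $k=(-1,\dots,-1)$ and $0$ otherwise, so we must show
\[
[x_1^{-1}\cdots x_m^{-1}]\, f_1^{k_1}\cdots f_m^{k_m}\,\frac{\partial(f_1,\dots,f_m)}{\partial(x_1,\dots,x_m)}
\]
equals $\det(A)$ when $k=(-1,\dots,-1)$ and $0$ otherwise.

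\textbf{Step 1: absorb each factor into its row.} Write $f_i = x_1^{p_{i1}}\cdots x_m^{p_{im}} g_i$, where $g_i$ is a power series with nonzero constant term. Then $\log g_i$ is a genuine formal power series (after normalising the constant term). Multiply row $i$ of the Jacobian matrix by $f_i^{k_i}$.
\begin{itemize}
\item If $k_i\neq -1$, row $i$ becomes the gradient of the Laurent series $F_i\coloneqq f_i^{k_i+1}/(k_i+1)$.
\item If $k_i=-1$, row $i$ becomes $\nabla \log f_i$, whose $j$-th entry is $p_{ij}/x_j + \partial_j \log g_i$.
\end{itemize}

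\textbf{Step 2: expand by multilinearity.} By multilinearity in the rows, the determinant becomes a sum of determinants. In each summand, every row is one of: the gradient of a Laurent series ($\nabla F_i$ or $\nabla\log g_i$), or the ``logarithmic'' row $(p_{i1}/x_1,\dots,p_{im}/x_m)=\nabla\big(\sum_j p_{ij}\log x_j\big)$.

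\textbf{Step 3: key lemma.} We show that if a determinant of gradient rows contains at least one row $\nabla H$ with $H$ a genuine Laurent series, its residue vanishes. To see this, expand along that row. The cofactor vector $(C_j)_j$ is built from derivatives of the other functions, including the $\log x_j$, whose derivatives $1/x_j$ are Laurent monomials, so each $C_j$ is a Laurent series. The Piola identity $\sum_j \partial_j C_j = 0$ holds formally, since it is the polynomial identity in mixed partials from equality of mixed derivatives. Hence the determinant equals $\sum_j \partial_j(H\,C_j)$, and every formal partial derivative of a Laurent series has zero coefficient of $x_1^{-1}\cdots x_m^{-1}$.

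\textbf{Step 4: conclude.} If some $k_i\neq -1$, every summand contains the row $\nabla F_i$, so the residue is $0$. If all $k_i=-1$, the only summand without a genuine Laurent gradient row is the one using every logarithmic row. That summand equals $\det(p_{ij}/x_j)=\det(A)/(x_1\cdots x_m)$, which has residue $\det(A)$.

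\textbf{Main obstacle.} I expect the main obstacle to be Step 3 done rigorously. We must verify the divergence (Piola) identity formally in the presence of the pseudo-functions $\log x_j$. We must also check that all the products $H\,C_j$ make sense in a common Laurent series ring, so that the interchange with the linear extension in $\Phi$ is justified. I would handle both by working in a field of iterated Laurent series with a fixed monomial order, where all the series involved live and differentiation is continuous.
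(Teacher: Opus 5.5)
The paper gives no proof of this statement. It quotes Gessel's Theorem~3 and uses it as a black box, and its only application (Section~5) has $A=I$ and $\Phi(t_1,t_2)=F(t_1,t_2)/(t_1^{2n+1}t_2^{2n+1})$, a Laurent polynomial. So your proposal takes a genuinely different route, an actual proof, and it is correct.

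Everything after the reduction to monomials already works in $\mathbb{C}((x_1,\dots,x_m))$ itself:
\begin{itemize}
\item The Piola identity only needs the rows $v_k$ to be closed, $\partial_l v_{kj}=\partial_j v_{kl}$. The logarithmic rows satisfy this because $\partial_l(p_{kj}/x_j)=-\delta_{jl}\,p_{kj}/x_j^2$, so the symbols $\log x_j$ never need to exist.
\item The cofactors $C_j$ and the products $H\,C_j$ are genuine bounded-below Laurent series.
\item $[x_1^{-1}\cdots x_m^{-1}]\,\partial_j(\cdot)$ vanishes on that ring.
\end{itemize}

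Compared with the citation, your version is self-contained. It also makes explicit a hypothesis that the paper's formulation silently drops: for general $\Phi$ the composite $\Phi(f_1,\dots,f_m)$ need not exist. For example, take $m=1$, $f_1=1+x_1$, $p_{11}=0$, $\Phi=\sum_{k\ge 0}t_1^k$; then the constant term of $\sum_k(1+x_1)^k$ diverges. So a summability assumption like yours, together with continuity of the left-hand side in $\Phi$, is genuinely required.

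For the paper's purposes, that part of your setup is unnecessary. A Laurent polynomial $\Phi$ is a finite combination of monomials, so your monomial computation gives the case used in Section~5 by linearity alone, with no iterated Laurent series or monomial order needed.
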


We are now ready to prove the conjecture.

\begin{thm}
For $n\ge 0$, we have
\[\binom{2n}{n}^3 =[\left(xyz\right)^{2n}]\left(1+xy+xz-yz\right)^{2n}\left(1+xy-xz+yz\right)^{2n}\left(1-xy+xz+yz\right)^{2n}. \]
\end{thm}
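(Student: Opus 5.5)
The first step is to pass to the monomials $a\coloneqq yz$, $b\coloneqq xz$, $c\coloneqq xy$. Every monomial of the polynomial on the right is $a^ib^jc^k=x^{j+k}y^{i+k}z^{i+j}$. The conditions $j+k=i+k=i+j=2n$ force $i=j=k=n$. Hence the right-hand side equals
\[
R_n\coloneqq [(abc)^n]\left(1-a+b+c\right)^{2n}\left(1+a-b+c\right)^{2n}\left(1+a+b-c\right)^{2n}.
\]
Next I would write $\varphi_1\coloneqq(1-a+b+c)^2$, $\varphi_2\coloneqq(1+a-b+c)^2$, $\varphi_3\coloneqq(1+a+b-c)^2$. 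Then $R_n=[(abc)^n]\varphi_1^n\varphi_2^n\varphi_3^n$ is a multivariate Lagrange-type coefficient, which is exactly where Theorem~\ref{thm: Jacobi residue} enters.

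I would take $f_i\coloneqq a_i/\varphi_i$, with $(a_1,a_2,a_3)=(a,b,c)$. Each $f_i/a_i$ is a power series with constant term $1$, so $A=I$ and $\det A=1$. Setting $\Phi(t_1,t_2,t_3)\coloneqq (t_1t_2t_3)^{-n-1}\,G(t)$ with $G$ to be chosen, the left side of Jacobi's formula becomes
\[
[(abc)^{-1}]\,\frac{\partial(f_1,f_2,f_3)}{\partial(a,b,c)}\,\frac{(\varphi_1\varphi_2\varphi_3)^{n+1}}{(abc)^{n+1}}\,G(f).
\]
I want this to equal $R_n$. So I would choose $G$ with $G(f)=\frac{\varphi_1\varphi_2\varphi_3}{abc}\Big/\frac{\partial(f_1,f_2,f_3)}{\partial(a,b,c)}$. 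Equivalently, $1/G(f)$ is the normalised Jacobian $\det\!\big(\delta_{ij}-a_j\,\partial_j\log\varphi_i\big)$. Jacobi's formula then gives $R_n=[(t_1t_2t_3)^n]G(t)$. The remaining task is to express $G$ as an explicit function of $t$, ideally rational.

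This is the main obstacle, because expressing $G$ in $t$ requires inverting $t_i=a_i/\varphi_i$. The inverse is algebraic, and the squares in $\varphi_i$ make it so. My first attempt is a symmetrisation trick. Introduce $s\coloneqq 1+a+b+c$, so that $1-a+b+c=s-2a$ and similarly for the others. The relations become $a=t_1(s-2a)^2$, and likewise for $b$ and $c$. I would use them to rewrite the normalised Jacobian as a polynomial in $t$ and $s$. I would then hope that the resulting diagonal coefficient coincides with that of $\frac{1}{1-(x+y)(1-4zt)-z-t}$ from Theorem~\ref{thm: GF3}, possibly after extra variables are introduced to linearise the squares. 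Splitting each square $\varphi_i=\psi_i\psi_i'$ into two identical linear factors gives a six-variable problem where each $f$ is linear-over-monomial, which Jacobi can handle.

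If this inversion gets out of hand, I have a fallback. Homogenise $R_n$ with a variable $t$, giving $[t^{3n}(abc)^n]$ of a product of linear forms. These are rows of the symmetric matrix with rows $(1,1,1,1)$, $(1,-1,1,1)$, $(1,1,-1,1)$, $(1,1,1,-1)$ in the variables $(t,a,b,c)$, with exponents $(0,2n,2n,2n)$. Lemma~\ref{lem: c-duality} then swaps the exponent vector with $(3n,n,n,n)$. Theorem~\ref{thm: MMT} converts the result into a diagonal coefficient of $1/\det(I-TM')$. I would then compare this with the rational function of Theorem~\ref{thm: GF3}, up to rescaling of variables and the factorial factor $\frac{(2n)!^3}{(3n)!\,n!^3}$. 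Checking that these factorials cancel is the delicate point in this route.
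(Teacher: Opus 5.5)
Your reduction to $R_n=[(abc)^n](1-a+b+c)^{2n}(1+a-b+c)^{2n}(1+a+b-c)^{2n}$ is correct and is also the paper's first step. However, neither of your two routes actually evaluates $R_n$.

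\textbf{The Jacobi route.} Taking $f_i=a_i/\varphi_i$ amounts to multivariate Lagrange inversion. It only rewrites $R_n$ as $[(t_1t_2t_3)^n]$ of $1/\Delta(a(t))$. Here $\Delta$ is your normalised Jacobian and $a(t)$ is the algebraic solution of $a_i=t_i(s-2a_i)^2$. (Incidentally, the condition should be $G(f)=1/(J\varphi_1\varphi_2\varphi_3)$ with $J$ the Jacobian. Your displayed formula for $G(f)$ is off, although your ``equivalently'' version $1/G(f)=\Delta$ is right.) Identifying this algebraic diagonal with the diagonal of Theorem~\ref{thm: GF3} is exactly the content of the statement. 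You give no mechanism for it, and the six-variable splitting is not worked out.

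\textbf{The fallback route.} Your use of Lemma~\ref{lem: c-duality} is correct and agrees with the paper. It gives $R_n=\frac{(2n)!^3}{(3n)!\,n!^3}K_n$ with $K_n=[a^{2n}b^{2n}c^{2n}](a+b+c)^{3n}(-a+b+c)^n(a-b+c)^n(a+b-c)^n$. But Theorem~\ref{thm: MMT} does not apply to this. It requires the exponent vector of the extracted monomial to equal the vector of powers of the linear forms. Here you have $(0,2n,2n,2n)$ against $(3n,n,n,n)$, and the reverse before dualising; no rescaling of variables changes that. Nor do the factorials ``cancel''. The statement is equivalent to $K_n=\frac{(3n)!}{n!^3}$, a multinomial coefficient. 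So Theorem~\ref{thm: GF3}, which produces $\binom{2n}{n}^3$, is not the right target.

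\textbf{The missing idea} is an actual evaluation of $K_n$. The paper first dehomogenises to
\[
K_n=[x^{2n}y^{2n}](x+y+1)^{3n}(x+y-1)^n(x-y+1)^n(-x+y+1)^n .
\]
It then applies Theorem~\ref{thm: Jacobi residue} to the rational substitution $x=\frac{U(1-V)}{2-U-V}$, $y=\frac{V(1-U)}{2-U-V}$, for which $\det A=1$. This substitution factors all four linear forms simultaneously:
\[
\frac{2(1-UV)}{2-U-V},\quad -\frac{2(1-U)(1-V)}{2-U-V},\quad \frac{2(1-V)}{2-U-V},\quad \frac{2(1-U)}{2-U-V}.
\]
The Jacobian $\frac{2(1-U)(1-V)}{(2-U-V)^3}$ cancels the leftover factors $(1-U)(1-V)$, leaving
\[
K_n=(-1)^n2^{6n+1}[U^{2n}V^{2n}]\frac{(1-UV)^{3n}}{(2-U-V)^{2n+1}}.
\]
This is a single alternating sum, which the Chu--Vandermonde identity evaluates to $\frac{(3n)!}{n!^3}$. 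Some evaluation of this kind, either of $K_n$ or of your algebraic diagonal, is what your proposal lacks.
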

\begin{proof}
Let
\[
C_n \coloneqq [\left(xyz\right)^{2n}]\left(1+xy+xz-yz\right)^{2n}\left(1+xy-xz+yz\right)^{2n}\left(1-xy+xz+yz\right)^{2n}.
\]

Suppose that $\left(xyz\right)^{2n}=\left(xy\right)^{r_1}\left(xz\right)^{r_2}\left(yz\right)^{r_3}$ for integers $r_1,r_2,r_3$. By comparing degrees, we have $r_1+r_2=r_1+r_3=r_2+r_3=2n$. Routine calculations then give us $r_1=r_2=r_3=n$, which implies that an $\left(xyz\right)^{2n}$ term can only be obtained by multiplying $\left(xy\right)^n$, $\left(xz\right)^n$, and $\left(yz\right)^n$ terms. Hence $C_n$ is also the coefficient of $\left(x_1y_1z_1\right)^{n}$ in 
\[\left(1+x_1+y_1-z_1\right)^{2n}\left(1+x_1-y_1+z_1\right)^{2n}\left(1-x_1+y_1+z_1\right)^{2n}.
\]

We relabel $x_1,y_1,$ and $z_1$ as $x,y,$ and $z$ respectively for convenience.

We therefore have that
\begin{align*}
C_n &=[(xyz)^n]\left(1+x+y-z\right)^{2n}\left(1+x-y+z\right)^{2n}\left(1-x+y+z\right)^{2n}\\
&= [t^{3n}x^ny^nz^n]\left(t+x+y-z\right)^{2n}\left(t+x-y+z\right)^{2n}\left(t-x+y+z\right)^{2n}\\
&= [t^{3n}x^ny^nz^n]\left(t+x+y+z\right)^0\left(t+x+y-z\right)^{2n}\left(t+x-y+z\right)^{2n}\left(t-x+y+z\right)^{2n},
\end{align*}

where the second-last equality follows from homogenisation.

By Lemma \ref{lem: c-duality}, we have that

\begin{align*}
\frac{C_n}{(2n)!^3} &=\frac{[t^{3n}x^ny^nz^n]\left(t+x+y+z\right)^0\left(t+x+y-z\right)^{2n}\left(t+x-y+z\right)^{2n}\left(t-x+y+z\right)^{2n}}{(2n)!^3}\\
&=\frac{[t^0x^{2n}y^{2n}z^{2n}]\left(t+x+y+z\right)^{3n}\left(t+x+y-z\right)^n\left(t+x-y+z\right)^n\left(t-x+y+z\right)^n}{(3n)!(n!)^3},
\end{align*}

and therefore that
\begin{align*}
C_n &=\frac{(2n)!^3[t^0x^{2n}y^{2n}z^{2n}]\left(t+x+y+z\right)^{3n}\left(t+x+y-z\right)^n\left(t+x-y+z\right)^n\left(t-x+y+z\right)^n}{(3n)!(n!)^3}\\
&=\frac{(2n)!^3[x^{2n}y^{2n}z^{2n}]\left(x+y+z\right)^{3n}\left(x+y-z\right)^n\left(x-y+z\right)^n\left(-x+y+z\right)^n}{(3n)!(n!)^3}.
\end{align*}

Let 
\begin{equation}\label{eqn:K_n}
K_n\coloneqq \frac{(3n)!(n!)^3 C_n}{(2n)!^3}.
\end{equation}

Then
\begin{align*}
K_n &= [x^{2n}y^{2n}z^{2n}]\left(x+y+z\right)^{3n}\left(x+y-z\right)^n\left(x-y+z\right)^n\left(-x+y+z\right)^n\\
&= [x^{2n}y^{2n}]\left(x+y+1\right)^{3n}\left(x+y-1\right)^n\left(x-y+1\right)^n\left(-x+y+1\right)^n.
\end{align*}

We evaluate $K_n$ by means of a substitution of variables and Theorem \ref{thm: Jacobi residue}.

Let
\[
x=x(U,V)=\frac{U(1-V)}{2-U-V} \text{, } y=y(U,V)=\frac{V(1-U)}{2-U-V},
\]

and
\[
F(x,y)\coloneqq \left(x+y+1\right)^{3n}\left(x+y-1\right)^n\left(x-y+1\right)^n\left(-x+y+1\right)^n.
\]

Then
\begin{equation}\label{eqn:K_n-Laurent}
K_n=[x^{2n}y^{2n}]F(x,y)=[x^{-1}y^{-1}] \frac{F(x,y)}{x^{2n+1}y^{2n+1}}.
\end{equation}

We have:
\begin{align*}
x+y+1 &= \frac{U(1-V)+V(1-U)+2-U-V}{2-U-V} = \frac{2(1-UV)}{2-U-V};\\
x+y-1 &= \frac{U+V-2UV+U+V-2}{2-U-V} = -\frac{2(1-U)(1-V)}{2-U-V};\\
x-y+1 &= \frac{U-UV-V+UV+2-U-V}{2-U-V} = \frac{2(1-V)}{2-U-V};\\
-x+y+1 &= \frac{-U+UV+V-UV+2-U-V}{2-U-V} = \frac{2(1-U)}{2-U-V}.
\end{align*}

This gives
\begin{align*}
F(x(U,V),y(U,V)) &= \left(x+y+1\right)^{3n}\left(x+y-1\right)^n\left(x-y+1\right)^n\left(-x+y+1\right)^n\\
&= \frac{2^{3n}\left(1-UV\right)^{3n}\left(-2\right)^n\left(1-U\right)^n\left(1-V\right)^n2^n\left(1-V\right)^n2^n\left(1-U\right)^n}{\left(2-U-V\right)^{6n}}\\
&= \frac{\left(-1\right)^n2^{6n}\left(1-UV\right)^{3n}\left(1-U\right)^{2n}\left(1-V\right)^{2n}}{\left(2-U-V\right)^{6n}}.
\end{align*}

We also have

\[x^{2n+1}y^{2n+1}=\frac{U^{2n+1}V^{2n+1}\left(1-U\right)^{2n+1}\left(1-V\right)^{2n+1}}{\left(2-U-V\right)^{4n+2}}.\]

Hence
\begin{equation}
\label{eqn:K_n-Laurent-expanded}
\begin{aligned}
\frac{F(x(U,V),y(U,V))}{x\left(U,V\right)^{2n+1}y\left(U,V\right)^{2n+1}}
&= \frac{\left(-1\right)^n2^{6n}\left(1-UV\right)^{3n}\left(1-U\right)^{2n}\left(1-V\right)^{2n}\left(2-U-V\right)^{4n+2}}{\left(2-U-V\right)^{6n}U^{2n+1}V^{2n+1}\left(1-U\right)^{2n+1}\left(1-V\right)^{2n+1}}\\
&=\frac{\left(-1\right)^n2^{6n}\left(1-UV\right)^{3n}}{\left(2-U-V\right)^{2n-2}U^{2n+1}V^{2n+1}\left(1-U\right)\left(1-V\right)}.
\end{aligned}
\end{equation}

By Theorem \ref{thm: Jacobi residue}, we have
\begin{equation}\label{eqn:applying-jacobi}
\det(A)\;[x^{-1}y^{-1}] \frac{F(x,y)}{x^{2n+1}y^{2n+1}} = [U^{-1}V^{-1}] \frac{\partial(x,y)}{\partial(U,V)} \frac{F(x(U,V),y(U,V))}{x\left(U,V\right)^{2n+1}y\left(U,V\right)^{2n+1}},
\end{equation}

where $A=\begin{pmatrix}
    p_{11} & p_{12}\\
    p_{21} & p_{22}
\end{pmatrix}$, for integers $p_{ij}$ such that
\[
\frac{x(U,V)}{U^{p_{11}}V^{p_{12}}} \text{ and } \frac{y(U,V)}{U^{p_{21}}V^{p_{22}}}
\]
are formal power series with nonzero constant term.

We have 
\[\frac{\partial(x,y)}{\partial(U,V)} = \det\begin{pmatrix}
    \frac{\partial x}{\partial U} & \frac{\partial x}{\partial V}\\
    \frac{\partial y}{\partial U} & \frac{\partial y}{\partial V}
\end{pmatrix} = \frac{\partial x}{\partial U}\frac{\partial y}{\partial V}-\frac{\partial x}{\partial V}\frac{\partial y}{\partial U}.\]

We evaluate the required partial derivatives.
\begin{align*}
\frac{\partial x}{\partial U} &=\frac{\partial}{\partial U}\left(\frac{U(1-V)}{2-U-V}\right) =\frac{(2-V)(1-V)}{\left(2-U-V\right)^2}\\
\frac{\partial y}{\partial V} &=\frac{\partial}{\partial V}\left(\frac{V(1-U)}{2-U-V}\right) =\frac{(2-U)(1-U)}{\left(2-U-V\right)^2}\\
\frac{\partial x}{\partial V} &=\frac{\partial}{\partial V}\left(\frac{U(1-V)}{2-U-V}\right) =\frac{U(U-1)}{\left(2-U-V\right)^2}\\
\frac{\partial y}{\partial U} &=\frac{\partial}{\partial U}\left(\frac{V(1-U)}{2-U-V}\right) =\frac{V(V-1)}{\left(2-U-V\right)^2}.
\end{align*}

We obtain
\begin{equation}\label{eqn:Jac-det}
\frac{\partial(x,y)}{\partial(U,V)} = \frac{(2-V)(1-V)(2-U)(1-U)-UV(1-U)(1-V)}{\left(2-U-V\right)^4} = \frac{2(1-U)(1-V)}{\left(2-U-V\right)^3}.
\end{equation}

We now determine the entries of $A$. We have
\begin{align*}
x(U,V) &= \frac{U(1-V)}{2-U-V} = \frac{U(1-V)}{2}\frac{1}{1-\left(\frac{U+V}{2}\right)}\\
&= \frac{U(1-V)}{2}\left(1+\left(\frac{U+V}{2}\right)+\left(\frac{U+V}{2}\right)^2+\cdots\right),
\end{align*}

and therefore, in order for 
\[\frac{x(U,V)}{U^{p_{11}}V^{p_{12}}}\]
to be a formal power series with nonzero constant term, we must have $p_{11}=1$ and $p_{12}=0$.

Similarly,
\begin{align*}
y(U,V) &= \frac{V(1-U)}{2-U-V} = \frac{V(1-U)}{2}\frac{1}{1-\left(\frac{U+V}{2}\right)}\\
&= \frac{V(1-U)}{2}\left(1+\left(\frac{U+V}{2}\right)+\left(\frac{U+V}{2}\right)^2+\cdots\right).
\end{align*}

In order for 
\[\frac{y(U,V)}{U^{p_{21}}V^{p_{22}}}\]
to be a formal power series with nonzero constant term, we must have $p_{21}=0$ and $p_{12}=1$.

Therefore, $A=\begin{pmatrix}
    1 & 0\\
    0 & 1
\end{pmatrix}$, and hence $\det(A)=1$.

From equations \eqref{eqn:K_n-Laurent} and \eqref{eqn:applying-jacobi}, we obtain
\begin{align*}
K_n &=[U^{-1}V^{-1}] \frac{\partial(x,y)}{\partial(U,V)} \frac{F(x(U,V),y(U,V))}{x(U,V)^{2n+1}y(U,V)^{2n+1}}\\
&= [U^{-1}V^{-1}]\frac{2(1-U)(1-V)\left(2-U-V\right)^{4n+2}\left(-1\right)^n2^{6n}\left(1-UV\right)^{3n}\left(1-U\right)^{2n}\left(1-V\right)^{2n}}{\left(2-U-V\right)^3U^{2n+1}V^{2n+1}\left(1-U\right)^{2n+1}\left(1-V\right)^{2n+1}\left(2-U-V\right)^{6n}}\\
&= [U^{-1}V^{-1}]\frac{\left(-1\right)^n2^{6n+1}\left(1-UV\right)^{3n}}{U^{2n+1}V^{2n+1}\left(2-U-V\right)^{2n+1}} = [U^{2n}V^{2n}]\frac{\left(-1\right)^n2^{6n+1}\left(1-UV\right)^{3n}}{\left(2-U-V\right)^{2n+1}},
\end{align*}
where the second equality follows from equations \eqref{eqn:K_n-Laurent-expanded} and \eqref{eqn:Jac-det}. 

Let
\begin{equation}\label{eqn:R_n}
R_n\coloneqq \left(-1\right)^n K_n = 2^{6n+1}[U^{2n}V^{2n}]\frac{\left(1-UV\right)^{3n}}{\left(2-U-V\right)^{2n+1}}.
\end{equation}

We have $\left(1-UV\right)^{3n}=\sum\limits_{r=0}^{3n}\binom{3n}{r}\left(-1\right)^r\left(UV\right)^r$.

Only $0\le r\le 2n$ contributes to the coefficient of $U^{2n}V^{2n}$, and hence
\begin{equation}\label{eqn:R_n-expanded-1}
R_n = 2^{6n+1}\sum\limits_{r=0}^{2n}\binom{3n}{r}\left(-1\right)^r [U^{2n-r}V^{2n-r}]\left(2-U-V\right)^{-2n-1}.
\end{equation}

We have
\[\left(2-U-V\right)^{-2n-1}=2^{-2n-1}(1-\left(\frac{U+V}{2}\right))^{-2n-1}= 2^{-2n-1}\sum\limits_{k\ge 0}\binom{k+2n}{k}\frac{\left(U+V\right)^k}{2^k},
\]

where the last equality follows from \cite[(2.5.7)]{gfology}.

This gives
\begin{align*}
[U^{2n-r}V^{2n-r}]\left(2-U-V\right)^{-2n-1} &= 2^{-2n-1}[U^{2n-r}V^{2n-r}]\sum\limits_{k\ge 0}\binom{k+2n}{k}\frac{\left(U+V\right)^k}{2^k}\\
&= 2^{-2n-1}\binom{4n-2r+2n}{4n-2r}\frac{\binom{4n-2r}{2n-r}}{2^{4n-2r}} = \frac{\binom{6n-2r}{4n-2r}\binom{4n-2r}{2n-r}}{2^{6n-2r+1}}.
\end{align*}

Equation \eqref{eqn:R_n-expanded-1} therefore yields
\begin{equation}\label{eqn:R_n-expanded-2}
\begin{aligned}
R_n &= 2^{6n+1}\sum\limits_{r=0}^{2n} \left(-1\right)^r\binom{3n}{r}\frac{\binom{6n-2r}{4n-2r}\binom{4n-2r}{2n-r}}{2^{6n-2r+1}}\\
&= \sum\limits_{r=0}^{2n} \left(-1\right)^r4^r\binom{3n}{r}\binom{6n-2r}{4n-2r}\binom{4n-2r}{2n-r}\\
&= \sum\limits_{r=0}^{2n} \left(-1\right)^r4^r\binom{3n}{r} \frac{(6n-2r)!}{(2n)!(4n-2r)!} \frac{(4n-2r)!}{(2n-r)!^2}\\
&= \sum\limits_{r=0}^{2n} \left(-1\right)^r4^r\binom{3n}{r} \frac{(6n-2r)!}{(2n)!(2n-r)!^2}.
\end{aligned}
\end{equation}

Recall the \emph{Pochhammer symbol}
\[
\left(a\right)_r\coloneqq a(a+1)\cdots(a+r-1) \text{ \cite[5.2.4]{Handbook}.}
\]
We will use it to simplify the final summation in Equation \eqref{eqn:R_n-expanded-2}.

We have
\begin{equation}\label{eqn:poch-1}
\begin{aligned}
\left(\frac{1}{2}-3n\right)_r &= \left(\frac{1}{2}-3n\right)\left(\frac{1}{2}-3n+1\right)\cdots \left(\frac{1}{2}-3n+r-1\right)\\
&= \left(\frac{1-6n}{2}\right)\left(\frac{1-6n+2}{2}\right)\cdots \left(\frac{1-6n+2r-2}{2}\right)\\
&= \frac{1}{2^r}(1-6n)(3-6n)\cdots (2r-1-6n)\\
&= \frac{\left(-1\right)^r}{2^r}(6n-1)(6n-3)\cdots (6n-2r+1).
\end{aligned}
\end{equation}

Using Equation \eqref{eqn:poch-1}, we get
\begin{equation}\label{eqn:poch-2}
\begin{aligned}
\left(-1\right)^r4^r\binom{3n}{r} (6n-2r)! &= \left(-1\right)^r4^r \frac{(6n-2r)!(3n)!}{r!(3n-r)!}\\
&= \left(-1\right)^r2^r \frac{(6n-2r)!2^r(3n-r+1)(3n-r+2)\cdots (3n)}{r!}\\
&= \left(-1\right)^r2^r \frac{(6n-2r)!(6n-2r+2)(6n-2r+4)\cdots (6n)}{r!}\\
&= \left(-1\right)^r2^r\frac{(6n-2r)!(6n-2r+1)(6n-2r+2)\cdots (6n)}{r!(6n-2r+1)(6n-2r+3)\cdots (6n-1)}\\
&= \left(-1\right)^r2^r\frac{(6n)!}{r!(6n-2r+1)(6n-2r+3)\cdots (6n-1)}\\
&= \frac{(6n)!}{r!\left(\frac{1}{2}-3n\right)_r}.
\end{aligned}
\end{equation}

We also have
\begin{equation}\label{eqn:poch-3}
\begin{aligned}
\frac{\left(-1\right)^r}{(2n-r)!} &= \frac{\left(-1\right)^r(2n)(2n-1)\cdots (2n-r+1)}{(2n)!}\\
&= \frac{(-2n)(-2n+1)\cdots (-2n+r-1)}{(2n)!}\\
&= \frac{\left(-2n\right)_r}{(2n)!}.
\end{aligned}
\end{equation}

Note that squaring both sides in Equation \eqref{eqn:poch-3} yields
\begin{equation}\label{eqn:poch-4}
\frac{1}{(2n-r)!^2} = \frac{\left(-2n\right)_r^2}{(2n)!^2}.
\end{equation}

We can finally simplify the expression for $R_n$ obtained in Equation \eqref{eqn:R_n-expanded-2} as follows, using equations \eqref{eqn:poch-2} and \eqref{eqn:poch-4}:
\begin{equation}\label{eqn:R_n-expanded-3}
\begin{aligned}
R_n &= \sum\limits_{r=0}^{2n} \left(-1\right)^r4^r\binom{3n}{r}(6n-2r)! \frac{1}{(2n-r)!^2} \frac{1}{(2n)!}\\
&= \sum\limits_{r=0}^{2n} \frac{(6n)!}{r!\left(\frac{1}{2}-3n\right)_r} \frac{(-2n)_r^2}{(2n)!^2} \frac{1}{(2n)!}\\
&= \frac{(6n)!}{(2n)!^3} \sum\limits_{r=0}^{2n} \frac{(-2n)_r^2}{r!\left(\frac{1}{2}-3n\right)_r}.
\end{aligned}
\end{equation}

Using Equation \eqref{eqn:R_n-expanded-3}, let
\begin{equation}\label{eqn:P_n}
P_n \coloneqq \frac{(2n)!^3}{(6n)!}R_n = \sum\limits_{r=0}^{2n} \frac{\left(-2n\right)_r^2}{r!\left(\frac{1}{2}-3n\right)_r}.
\end{equation}

Recall that the \emph{hypergeometric function} $_2F_1(a,b;c;z)\coloneqq \sum\limits_{s\ge 0}\frac{\left(a\right)_s\left(b\right)_s}{\left(c\right)_s}z^s$ (\cite[15.1.1 and 15.2.1]{Handbook}) satisfies
\begin{equation}\label{eqn:hypergeom}
_2F_1(-m,b;c;z) = \sum\limits_{r=0}^m\left(-1\right)^r\binom{m}{r}\frac{\left(b\right)_r}{\left(c\right)_r}z^r \text{ \cite[15.2.4]{Handbook}}
\end{equation}
when $m$ is a non-negative integer and $c$ is not a negative integer.

We therefore have
\begin{equation}\label{eqn:P_n-expanded-1}
\begin{aligned}
P_n &= \sum\limits_{r=0}^{2n} \frac{\left(-2n\right)_r^2}{r!\left(\frac{1}{2}-3n\right)_r} \text{ [From Equation \eqref{eqn:P_n}]}\\
&=\sum\limits_{r=0}^{2n} \frac{\left(-2n\right)_r}{(2n)!} \frac{(2n)!}{r!} \frac{\left(-2n\right)_r}{\left(\frac{1}{2}-3n\right)_r}\\
&=\sum\limits_{r=0}^{2n} \frac{\left(-1\right)^r}{(2n-r)!} \frac{(2n)!}{r!} \frac{\left(-2n\right)_r}{\left(\frac{1}{2}-3n\right)_r} \text{ [From Equation \eqref{eqn:poch-3}]}\\
&= \sum\limits_{r=0}^{2n} \left(-1\right)^r\binom{2n}{r} \frac{\left(-2n\right)_r}{\left(\frac{1}{2}-3n\right)_r}\\
&\text{= } _2F_1(-2n,2n;\frac{1}{2}-3n;1) \text{ [From Equation \eqref{eqn:hypergeom}]}.
\end{aligned}
\end{equation}

The Chu-Vandermonde identity \cite[15.4.24]{Handbook} states that 
\begin{equation}\label{eqn:Chu-Vandermonde}
_2F_1(-m,b;c;1)=\frac{\left(c-b\right)_m}{\left(c\right)_m}
\end{equation}
under the same hypotheses as Equation \eqref{eqn:hypergeom}.

Combining equations \eqref{eqn:hypergeom} and \eqref{eqn:Chu-Vandermonde} yields
\begin{equation}\label{eqn:P_n-expanded-2}
P_n\text{ = }_2F_1(-2n,2n;\frac{1}{2}-3n;1) = \frac{\left(\frac{1}{2}-n\right)_{2n}}{\left(\frac{1}{2}-3n\right)_{2n}}.
\end{equation}

We will now simplify $\left(\frac{1}{2}-n\right)_{2n}$ and $\left(\frac{1}{2}-3n\right)_{2n}$.

The former can be written as
\begin{equation}\label{eqn:poch-num}
\begin{aligned}
\left(\frac{1}{2}-n\right)_{2n} &= \left(\frac{1}{2}-n\right)\left(\frac{1}{2}-n+1\right)\cdots \left(\frac{1}{2}-n+2n-1\right)\\
&= \left(\frac{1-2n}{2}\right)\left(\frac{1-2n+2}{2}\right)\cdots \left(\frac{1-2n+4n-2}{2}\right)\\
&= \frac{(1-2n)(3-2n)\cdots (2n-1)}{2^{2n}}\\
&= \frac{\left(-1\right)^n\left((1)(3)\cdots (2n-1)\right)^2}{2^{2n}}\\
&= \left(-1\right)^n \left(\left(\frac{1}{2}\right)\left(\frac{3}{2}\right)\cdots \left(\frac{2n-1}{2}\right)\right)^2\\
&= \left(-1\right)^n\left(\frac{1}{2}\left(\frac{1}{2}+1\right)\cdots \left(\frac{1}{2}+n-1\right)\right)^2\\
&= \left(-1\right)^n\left(\frac{1}{2}\right)_n^2.
\end{aligned}
\end{equation}

The latter simplifies to
\begin{equation}\label{eqn:poch-den}
\begin{aligned}
\left(\frac{1}{2}-3n\right)_{2n} &= \left(\frac{1-6n}{2}\right)\left(\frac{1-6n+2}{2}\right)\cdots \left(\frac{1-6n+4n-2}{2}\right)\\
&= \frac{(1-6n)(3-6n)\cdots (-2n-1)}{2^{2n}}\\
&= \frac{\left(-1\right)^{2n}(2n+1)(2n+3)\cdots (6n-1)}{2^{2n}}\\
&= \frac{(2n+1)(2n+3)\cdots (6n-1)}{2^{2n}}\\
&= \frac{(1)(3)\cdots (6n-1)}{2^{3n}} \frac{2^n}{(1)(3)\cdots (2n-1)}\\
&= \frac{\left(\frac{1}{2}\right)\left(\frac{3}{2}\right)\cdots \left(\frac{6n-1}{2}\right)}{\left(\frac{1}{2}\right)\left(\frac{3}{2}\right)\cdots \left(\frac{2n-1}{2}\right)}\\
&= \frac{\left(\frac{1}{2}\right)_{3n}}{\left(\frac{1}{2}\right)_{n}}.
\end{aligned}
\end{equation}

Combining equations \eqref{eqn:P_n-expanded-2}, \eqref{eqn:poch-num}, and \eqref{eqn:poch-den} yields
\begin{equation}\label{eqn:P_n-expanded-3}
P_n = \left(-1\right)^n \frac{\left(\frac{1}{2}\right)_{n}^3}{\left(\frac{1}{2}\right)_{3n}}.
\end{equation}

We have
\begin{equation}\label{eqn:poch-5}
\begin{aligned}
\left(\frac{1}{2}\right)_{n} &= \left(\frac{1}{2}\right)\left(\frac{3}{2}\right)\cdots \left(\frac{2n-1}{2}\right)\\
&= (1)(3)\cdots (2n-1) \frac{(2^n) n!}{(4^n) n!}\\
&= \frac{((1)(3)\cdots (2n-1))((2)(4)\cdots (2n))}{(4^n)n!}\\
&= \frac{(2n)!}{(4^n)n!}.
\end{aligned}
\end{equation}

From equations \eqref{eqn:P_n-expanded-3} and \eqref{eqn:poch-5}, we obtain
\begin{equation}\label{eqn:P_n-final}
P_n = \left(-1\right)^n \frac{\left(\frac{1}{2}\right)_{n}^3}{\left(\frac{1}{2}\right)_{3n}} = \left(-1\right)^n \frac{(2n)!^3}{(4^{3n})n!^3} \frac{(4^{3n})(3n)!}{(6n)!} = \left(-1\right)^n \frac{(2n)!^3}{n!^3} \frac{(3n)!}{(6n)!}.
\end{equation}

Using equations \eqref{eqn:P_n} and \eqref{eqn:P_n-final}, we get
\begin{equation}\label{eqn:R_n-final}
R_n = \frac{(6n)!}{(2n)!^3} \left(-1\right)^n \frac{(2n)!^3}{n!^3} \frac{(3n)!}{(6n)!} = \left(-1\right)^n \frac{(3n)!}{n!^3}.
\end{equation}

Equations \eqref{eqn:R_n} and \eqref{eqn:R_n-final} then give
\begin{equation}\label{eqn:K_n-final}
K_n=\left(-1\right)^n R_n = \frac{(3n)!}{n!^3}.
\end{equation}

Finally, equations \eqref{eqn:K_n} and \eqref{eqn:K_n-final} yield
\begin{equation}\label{eqn:C_n-final}
C_n = \frac{(2n)!^3}{(3n)!(n!)^3} K_n = \frac{(2n)!^3}{(3n)!(n!)^3} \frac{(3n)!}{n!^3} = \frac{(2n)!^3}{(n!)^6}=\binom{2n}{n}^3,
\end{equation}

thus proving the theorem.
\end{proof}

\section{On the square root of a generating function}\label{sec 6}

\begin{lem}\label{lem:sqrt-gf}
Let $A(x)=\sum\limits_{n\ge 0}a_nx^n \in \mathbb{C}[[x]]$ such that $a_0\ne 0$. Then there exists $B(x)=\sum\limits_{n\ge 0}b_nx^n \in \mathbb{C}[[x]]$ such that $B(x)^2=A(x)$.
\end{lem}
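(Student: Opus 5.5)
We construct the coefficients $b_n$ recursively, by comparing coefficients in the identity $B(x)^2=A(x)$. Since $a_0\neq 0$ and we work over $\mathbb{C}$, we may fix a complex number $b_0$ with $b_0^2=a_0$; necessarily $b_0\neq 0$. The coefficient of $x^n$ in $B(x)^2$ is $\sum_{k=0}^n b_kb_{n-k}$. So $B(x)^2=A(x)$ holds if and only if, for every $n\ge 0$,
\[
\sum_{k=0}^n b_kb_{n-k}=a_n .
\]

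For $n\ge 1$, the terms with $k=0$ and $k=n$ each contribute $b_0b_n$. We therefore isolate them and rewrite the condition as
\[
2b_0b_n + \sum_{k=1}^{n-1} b_kb_{n-k} = a_n .
\]
This expresses $b_n$ in terms of $b_1,\dots,b_{n-1}$. Because $2b_0\neq 0$ in $\mathbb{C}$, we define inductively
\[
b_n\coloneqq \frac{1}{2b_0}\left(a_n-\sum_{k=1}^{n-1} b_kb_{n-k}\right), \qquad n\ge 1.
\]
By strong induction on $n$, each $b_n$ is a well-defined complex number, depending only on $a_0,\dots,a_n$ and the earlier $b_k$.

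It remains to check that $B(x)\coloneqq \sum_{n\ge 0}b_nx^n$ satisfies $B(x)^2=A(x)$. By the definition of multiplication in $\mathbb{C}[[x]]$, the $x^n$-coefficient of $B(x)^2$ is $\sum_{k=0}^n b_kb_{n-k}$. For $n=0$ this is $b_0^2=a_0$. For $n\ge 1$ it is $2b_0b_n+\sum_{k=1}^{n-1}b_kb_{n-k}$, which equals $a_n$ by construction.

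There is no real obstacle here. The only points needing care are that a square root $b_0$ of $a_0$ exists, since $\mathbb{C}$ is algebraically closed, and that $2b_0$ is invertible, which uses $a_0\ne 0$ and characteristic zero. Alternatively, one could write $A=a_0(1+xC(x))$ and take $B=b_0\sum_{k\ge 0}\binom{1/2}{k}(xC(x))^k$, which is a well-defined formal sum because $(xC(x))^k$ has order at least $k$. However, verifying $B^2=A$ that way needs the formal binomial identity, so the recursive argument is cleaner.
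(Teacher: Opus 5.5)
Your proposal is correct and matches the paper's proof essentially verbatim. The paper also fixes $b_0$ as a (principal) square root of $a_0$, defines $b_n=\frac{1}{2b_0}\bigl(a_n-\sum_{k=1}^{n-1}b_kb_{n-k}\bigr)$ for $n\ge 1$, and verifies the identity $a_n=\sum_{k=0}^{n}b_kb_{n-k}$ coefficientwise.
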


\begin{proof}
We construct the required $B(x)$ by defining $b_n$ inductively.

For $A(x),B(x)\in\mathbb{C}[[x]]]$, we have, by \cite[(2.2.3)]{gfology}, that $B(x)^2=A(x)$ if and only if $a_n=\sum\limits_{k=0}^nb_kb_{n-k}$ for any $n\ge 0$.

Define $b_0$ to be the principal square root of $a_0$. Note that $b_0\ne 0$. Then $a_0=b_0^2=\sum\limits_{k=0}^0b_kb_{-k}$.

Define $b_n\coloneqq \frac{1}{2b_0}\left(a_n-\sum\limits_{k=1}^{n-1}b_kb_{n-k}\right)$ for $n\ge 1$. Then $a_n = 2b_0b_n + \sum\limits_{k=1}^{n-1}b_kb_{n-k} =\sum\limits_{k=0}^nb_kb_{n-k}$ for $n\ge 1$.

Hence $a_n =\sum\limits_{k=0}^nb_kb_{n-k}$ for $n\ge 0$. We have thus constructed the required square root $B(x)$ of $A(x)$.
\end{proof}


Let $a_n\coloneqq \binom{n}{k}^2\binom{2k}{n}^2$. In \cite{A290575}, Chapoton conjectured the following result, which we prove.

\begin{thm}
The ordinary power series generating function $A(x)$ of $\{a_n\}_{n\ge 0}$ has a square root with integer coefficients.
\end{thm}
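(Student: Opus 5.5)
The plan is to read $a_n$ as $\sum_k \binom{n}{k}^2\binom{2k}{n}^2$, so that $a_0=1$ and every $a_n\in\mathbb{Z}$. Lemma \ref{lem:sqrt-gf} then gives a square root $B(x)=\sum_{n\ge 0} b_nx^n$ of $A(x)$ with $b_0=1$. Its proof shows the coefficients are forced by
\[
2b_n = a_n-\sum_{k=1}^{n-1}b_kb_{n-k}\qquad (n\ge 1).
\]
So it suffices to show by induction that the right-hand side is always even. Suppose $b_0,\dots,b_{n-1}$ are integers. In $\sum_{k=1}^{n-1}b_kb_{n-k}$ the terms for $k$ and $n-k$ pair up. Hence the sum is congruent mod $2$ to $b_{n/2}^2\equiv b_{n/2}$ when $n$ is even, and to $0$ when $n$ is odd. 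The integrality condition at step $n$ therefore reads:
\[
a_n\equiv 0 \pmod 2 \text{ for } n \text{ odd},\qquad a_n\equiv b_{n/2}\pmod 2 \text{ for } n \text{ even}.
\]

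The arithmetic input is a parity statement, which I will prove with Lucas' theorem: $a_n$ is even for odd $n$, and $a_{2m}\equiv a_m \pmod 2$. Since $c^2\equiv c \pmod 2$ and Frobenius is additive mod $2$, we have $a_n\equiv\sum_k\binom{n}{k}\binom{2k}{n}\pmod 2$.

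\emph{Odd $n$.} The last binary digit of $2k$ is $0$ while that of $n$ is $1$, so by Lucas $\binom{2k}{n}$ is even and every term vanishes.

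\emph{$n=2m$.} The factor $\binom{2m}{k}$ is odd only if $k$ is even, say $k=2j$. By Lucas, $\binom{2m}{2j}\equiv\binom{m}{j}$ and $\binom{4j}{2m}\equiv\binom{2j}{m}$. So $a_{2m}\equiv\sum_j\binom{m}{j}\binom{2j}{m}\equiv a_m$.

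In power-series terms this says $A(x)\equiv A(x^2)\pmod 2$.

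It remains to link $b_{n/2}$ with $a_{n/2}$; I expect this to be the one delicate point. Proving $b_n\equiv a_n \pmod 2$ directly in the induction looks awkward. I would instead use $B(x)^2=A(x)$ together with $B(x)^2\equiv B(x^2)\pmod 2$, applied only to coefficients of index $<n$, all of which are integers by the inductive hypothesis. Comparing coefficients of $x^{2j}$ for $2j<n$ gives $b_j\equiv a_{2j}\equiv a_j\pmod 2$ for all $j<n/2$.

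For even $n$ this argument only covers $j<n/2$, so the index $j=n/2$ itself is not reached. I would therefore also run the comparison at $x^{n}$, keeping track of the unknown $b_n$ through the term $2b_0b_n$. If that stays circular, I would fall back on a strengthened induction hypothesis $b_j\equiv a_j \pmod 2$ for all $j<n$. This holds for $j=0$, and for $0<j<n$ it should follow from $b_j\equiv a_{2j}\equiv a_j$ applied at an earlier stage. Once the needed congruence for $b_{n/2}$ is available, combining it with the displayed integrality conditions and the parity statement shows that $a_n-\sum_{k=1}^{n-1}b_kb_{n-k}$ is even. Hence $b_n\in\mathbb{Z}$, which completes the induction and proves the theorem.
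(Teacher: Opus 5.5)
There is a genuine gap, at exactly the step you flagged, and neither fallback closes it.

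Reading off $b_j\equiv a_{2j}\pmod 2$ from the coefficient of $x^{2j}$ in $B(x)^2=A(x)$ requires $b_0,\dots,b_{2j}$ to already be integers. For $j=n/2$ that includes $b_n$ itself. To maintain your strengthened hypothesis you would need the same comparison for $j=n$, which requires integrality up to $b_{2n}$. That is a later stage, not an earlier one. Comparing at $x^n$ just reproduces $2b_n=a_n-\sum_{k=1}^{n-1}b_kb_{n-k}$, so that route is circular too.

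The underlying problem is that the parity of $b_m$ is governed by $a_m-\sum_{k=1}^{m-1}b_kb_{m-k}$ modulo $4$. You have only extracted information about $a_n$ modulo $2$, and that is provably not enough. Take $A(x)=1+2x$: it satisfies both of your parity facts ($a_n$ even for odd $n$, and $a_{2m}\equiv a_m$, i.e.\ $A(x)\equiv A(x^2)\pmod 2$). Yet $b_1=1\not\equiv a_1$, and $\sqrt{1+2x}=1+x-\tfrac12x^2+\cdots$ is not integral.

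The missing ingredient is a congruence modulo $4$, and the reduction $c^2\equiv c\pmod 2$ is where you discarded it. Your Lucas argument actually shows more than you used: for $n\ge 1$, every individual summand $c_k=\binom{n}{k}\binom{2k}{n}$ is even.
\begin{itemize}
\item For odd $n$ this is immediate from your argument.
\item For $n=2m$, the odd-$k$ summands are even, and $c_{2j}\equiv\binom{m}{j}\binom{2j}{m}$, so you can descend on the $2$-adic valuation of $n$.
\end{itemize}
Since $a_n=\sum_k c_k^2$, this gives $a_n\equiv 0\pmod 4$ for $n\ge 1$. The paper obtains the same fact more quickly from $\binom{n}{k}\binom{2k}{n}=\binom{2k}{k}\binom{k}{n-k}$ and $\binom{2k}{k}=2\binom{2k-1}{k-1}$ for $k\ge 1$.

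With this congruence, the right induction statement is simply ``$b_n$ is an even integer for $n\ge 1$.'' Each $b_kb_{n-k}$ with $1\le k\le n-1$ is then divisible by $4$, so $a_n-\sum_{k=1}^{n-1}b_kb_{n-k}\equiv 0\pmod 4$ and $b_n$ is even. No pairing of terms or comparison with $B(x^2)$ is needed.
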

\begin{proof}
We have 
\[\binom{n}{k}\binom{2k}{n}=\frac{n!}{k!(n-k)!}\frac{(2k)!}{n!(2k-n)!}=\frac{(2k)!}{k!k!}\frac{k!}{(n-k)!(2k-n)!}=\binom{2k}{k}\binom{k}{n-k}.\]

This gives $a_0=\sum\limits_{k=0}^0\binom{2k}{k}^2\binom{k}{-k}^2=\binom{0}{0}^4=1$.

Let $B(x)=\sum\limits_{n\ge 0}b_nx^n$ be the square root of $A(x)$ constructed in Lemma \ref{lem:sqrt-gf}. Then $b_0=1$ and $b_n= \frac{1}{2b_0}\left(a_n-\sum\limits_{k=1}^{n-1}b_kb_{n-k}\right)$ for $n\ge 1$.

We have $a_n=\sum\limits_{k=0}^nb_kb_{n-k}$ for $n\ge 0$. Suppose $n\ge 1$. Then $a_n=\sum\limits_{k=0}^n\binom{2k}{k}^2\binom{k}{n-k}^2$.

For $k=0$, $\binom{k}{n-k}=0$ and hence $\binom{2k}{k}^2\binom{k}{n-k}^2\equiv 0\;(\mathrm{mod}\;4)$. For $k\ge 1$, we have 
\[\binom{2k}{k}=\frac{(2k)!}{k!k!}=2 \frac{(2k-1)!}{(k-1)!k!}=2\binom{2k-1}{k-1},\]
and hence $\binom{2k}{k}^2\binom{k}{n-k}^2\equiv 0\;(\mathrm{mod}\;4)$.

This implies that $a_n=\sum\limits_{k=0}^n\binom{2k}{k}^2\binom{k}{n-k}^2 \equiv 0\;(\mathrm{mod}\;4)$ for $n\ge 1$.

We now prove by induction that $b_n$ is an even integer for $n\ge 1$.

We have $a_1=\sum\limits_{k=0}^1\binom{2k}{k}^2\binom{k}{1-k}^2=\binom{0}{0}^2\binom{0}{1}^2+\binom{2}{1}^2\binom{1}{0}^2=4$.

As defined above, $b_n=\frac{1}{2b_0}\left(a_n-\sum\limits_{k=1}^{n-1}b_kb_{n-k}\right)=\frac{1}{2}\left(a_n-\sum\limits_{k=1}^{n-1}b_kb_{n-k}\right)$.

Therefore, $b_1=\frac{a_1}{2}=2$, thus proving the base case.

For the inductive step, fix some $n\ge 2$ and suppose that $b_k$ is even for $1\le k\le n-1$.

Then $b_kb_{n-k}\equiv 0\;(\mathrm{mod}\;4)$ for each $1\le k\le n-1$.

We have already proved above that $a_n\equiv 0\;(\mathrm{mod}\;4)$ for $n\ge 1$. This implies that $b_n=\frac{1}{2}\left(a_n-\sum\limits_{k=1}^{n-1}b_kb_{n-k}\right)$ is an even integer for $n\ge 1$, proving that $B(x)$ has integer coefficients.
\end{proof}

\section{Recurrence relations satisfied by some binomial sums}\label{sec 7}
In this section, we resolve conjectures on recurrence relations satisfied by the sequences $\mathrm{A127361}(n)$ (\cite{A127361}) and $\mathrm{A105872}(n)$ (\cite{A105872}).


We begin with the conjecture on the sequence $\mathrm{A127361}(n)$ (\cite{A127361}).

Let $a_n\coloneqq \sum\limits_{k=0}^n\binom{n}{\left\lfloor\frac{k}{2}\right\rfloor}\left(-2\right)^{n-k}$. In \cite{A127361}, Mathar conjectured the following result, which we prove.

\begin{thm}
For $n\ge 3$, we have
\[
2na_n+(5n-4)a_{n-1}-(8n-6)a_{n-2}-20(n-2)a_{n-3}=0.
\]
\end{thm}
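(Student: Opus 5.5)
The plan is to find a closed algebraic form for the generating function $A(z)=\sum_{n\ge 0}a_nz^n$. From it I will derive a first-order linear differential equation with polynomial coefficients, and read off the recurrence by comparing coefficients.

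\emph{Step 1: remove the floor function.} Split the sum according to the parity of $k$. For $k=2j$ and $k=2j+1$ the binomial factor is the same, $\binom{n}{j}$. Moreover
\[(-2)^{n-2j}+(-2)^{n-2j-1}=\tfrac12(-2)^{n-2j}.\]
Hence
\[
a_n=\frac12\sum_{0\le j\le n/2}\binom{n}{j}(-2)^{n-2j}+\varepsilon_n .
\]
The boundary term $\varepsilon_n$ appears only when $n$ is even and $j=n/2$, because then the odd index $k=n+1$ is absent. In that case $\varepsilon_n=-\frac{1}{2}(-2)^{-1}\binom{n}{n/2}$ up to sign bookkeeping, so $\varepsilon_n$ is a rational multiple of $\binom{n}{n/2}$ for even $n$ and zero for odd $n$. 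I will track this term carefully, since its generating function $\sum_m\binom{2m}{m}z^{2m}=(1-4z^2)^{-1/2}$ is also algebraic.

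\emph{Step 2: compute the generating function.} Write $n=2j+m$ with $m\ge 0$. Using the classical identity
\[\sum_{j\ge0}\binom{2j+m}{j}x^j=\frac{c(x)^m}{\sqrt{1-4x}},\qquad c(x)=\frac{1-\sqrt{1-4x}}{2x},\]
I get
\[
\sum_{n\ge 0}z^n\sum_{j\le n/2}\binom{n}{j}u^{n-2j}=\frac{1}{\sqrt{1-4z^2}\,\bigl(1-uz\,c(z^2)\bigr)} .
\]
Setting $u=-2$ and adding the correction from Step 1 gives
\[A(z)=\frac{\alpha}{\sqrt{1-4z^2}}+\frac{\beta}{\sqrt{1-4z^2}\,(1+2z\,c(z^2))}\]
with explicit constants $\alpha,\beta$. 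I will then simplify, writing $S=\sqrt{1-4z^2}$, so that $2z\,c(z^2)=(1-S)/z$. Rationalising the denominator should put $A(z)$ in the form $\bigl(p(z)+q(z)/S\bigr)/r(z)$ for polynomials $p,q,r$. Presumably the factor $1+2z-\dots$ produces the root $z=-1/2$ or $z=2/5$, matching the leading coefficients $2$ and $-20$ in the recurrence.

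\emph{Step 3: from the differential equation to the recurrence.} The recurrence corresponds to an operator of the form $Q_1(z)A'(z)+Q_0(z)A(z)=R(z)$ with $\deg Q_1\le 4$. Concretely, $\sum_n z^n$ times the left-hand side of the claim equals
\[(2z+5z^2-8z^3-20z^4)A'(z)+(-4z\cdot z\ \text{terms}\dots)A(z)\]
plus polynomial corrections from $n<3$. Note that $2+5z-8z^2-20z^3=(2+5z)(1-4z^2)$, which is very suggestive given Step 2. So I will verify directly that the explicit $A(z)$ satisfies
\[z(2+5z)(1-4z^2)A'(z)+(\text{linear/cubic polynomial})\,A(z)=\text{polynomial of degree}\le 2.\]
This is done by differentiating, clearing $S$, and checking that the coefficients of $1$ and $1/S$ both vanish. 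Extracting $[z^n]$ for $n\ge3$ then yields exactly the stated recurrence, and the low-degree polynomial absorbs the initial terms.

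\emph{Main obstacle.} I expect the main difficulty to be Step 3. The boundary correction $\varepsilon_n$ and the rational prefactor must combine so that the annihilating operator has precisely the factor $(2+5z)(1-4z^2)$, and not merely some left multiple of it. If direct verification becomes messy, my first fallback is to derive the minimal first-order equation by computing $A'/A$-type combinations symbolically in $z$ and $S$. As a sanity check I will also compare against the values $a_0,\dots,a_5$ computed by hand.
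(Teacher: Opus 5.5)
Your proposal is correct, but it takes a different route from the paper.

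\textbf{The paper's route.} The paper never finds $A(z)$. It applies Pascal's rule to $\binom{n}{\lfloor k/2\rfloor}$ and obtains the inhomogeneous first-order relation $2a_n+5a_{n-1}=3b_n$, where $b_n=\binom{n-1}{\lfloor (n-1)/2\rfloor}$. It then uses $2B(x)+1=(1+2x)/\sqrt{1-4x^2}$ to derive $nb_n-2b_{n-1}-4(n-2)b_{n-2}=0$, and substitutes $3b_n=2a_n+5a_{n-1}$.

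\textbf{Your route.} You compute $A(z)$ in closed form and then check an ODE. Carrying out your Steps 1--2, and using $(1+z)^2-S^2=z(2+5z)$, gives
\[
A(z)=\frac{1}{2S}+\frac{z}{2S(1+z-S)}=\frac{1}{2(2+5z)}+\frac{3(1+2z)}{2(2+5z)S}.
\]
Take $L=z(2+5z)(1-4z^2)\frac{d}{dz}+z(1-10z-20z^2)$; the unspecified coefficient $Q_0$ in your Step 3 is this cubic. Then $L\bigl[\frac{1+2z}{(2+5z)S}\bigr]=0$ and $L\bigl[\frac{1}{2(2+5z)}\bigr]=-z$. Hence $LA=-z$, so your verification goes through, and in fact the recurrence already holds for $n\ge 2$.

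\textbf{How the two proofs meet.} Your closed form satisfies $2(2+5z)A(z)=1+3(1+2z)/S$, which is exactly the generating-function version of the paper's relation $2a_n+5a_{n-1}=3b_n$.
\begin{itemize}
\item The paper reaches that relation by an elementary Pascal-rule manipulation and needs only the central binomial series.
\item Your route needs the identity for $\sum_j\binom{2j+m}{j}x^j$ and a rationalisation. In return it gives an explicit closed form for $A$ and explains the leading coefficient of the operator. The new factor is $2+5z$, with root $z=-2/5$ (not $2/5$ as you guessed), and the remaining roots $\pm\tfrac12$ are the branch points of $S$.
\end{itemize}

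\textbf{A slip to fix.} The boundary term is $\varepsilon_n=\tfrac12\binom{n}{n/2}$ for even $n$, since you remove the absent summand $(-2)^{-1}\binom{n}{n/2}$. Your expression $-\tfrac12(-2)^{-1}$ equals $\tfrac14$. This constant matters: with $\tfrac14$ you would get $A(0)=\tfrac34\neq a_0$, and $LA$ would contain the non-polynomial term $-\tfrac14\,z(1-2z)/S$.
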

\begin{proof}
For $n\ge 1$,
\begin{equation}\label{eqn:a_n}
\begin{aligned}
a_n &= \sum\limits_{k=0}^n\binom{n}{\left\lfloor\frac{k}{2}\right\rfloor}\left(-2\right)^{n-k}\\
&= \sum\limits_{k=0}^n\left(\binom{n-1}{\left\lfloor\frac{k}{2}\right\rfloor}+\binom{n-1}{\left\lfloor\frac{k}{2}\right\rfloor-1}\right)\left(-2\right)^{n-k}\\
&= \sum\limits_{k=0}^n\binom{n-1}{\left\lfloor\frac{k}{2}\right\rfloor}\left(-2\right)^{n-k}+\sum\limits_{k=0}^n\binom{n-1}{\left\lfloor\frac{k}{2}\right\rfloor-1}\left(-2\right)^{n-k}.
\end{aligned}
\end{equation}

Let
\begin{equation}\label{eqn:S_n}
S_n \coloneqq \sum\limits_{k=0}^n\binom{n-1}{\left\lfloor\frac{k}{2}\right\rfloor}\left(-2\right)^{n-k}
\end{equation}
and
\begin{equation}\label{eqn:S_n'}
S_n' \coloneqq \sum\limits_{k=0}^n\binom{n-1}{\left\lfloor\frac{k}{2}\right\rfloor-1}\left(-2\right)^{n-k}.
\end{equation}

Then
\begin{equation}\label{eqn:S_n-expanded}
S_n=\binom{n-1}{\left\lfloor \frac{n}{2}\right\rfloor}+(-2)\sum\limits_{k=0}^{n-1}\binom{n-1}{\left\lfloor\frac{k}{2}\right\rfloor}\left(-2\right)^{n-1-k} = \binom{n-1}{\left\lfloor \frac{n}{2}\right\rfloor} -2a_{n-1}   
\end{equation}

and
\begin{equation}\label{eqn:S_n'-expanded}
\begin{aligned}
S_n' &= \sum\limits_{k=2}^n\binom{n-1}{\left\lfloor\frac{k}{2}\right\rfloor-1}\left(-2\right)^{n-k}\\
&= \sum\limits_{j=0}^{n-2}\binom{n-1}{\left\lfloor\frac{j+2}{2}\right\rfloor-1}\left(-2\right)^{n-2-j}\\
&= \sum\limits_{j=0}^{n-2}\binom{n-1}{\left\lfloor\frac{j}{2}\right\rfloor}\left(-2\right)^{n-2-j}\\
&= -\frac{1}{2}\sum\limits_{j=0}^{n-2}\binom{n-1}{\left\lfloor\frac{j}{2}\right\rfloor}\left(-2\right)^{n-1-j}\\
&= -\frac{1}{2}\left(-\binom{n-1}{\left\lfloor \frac{n-1}{2} \right\rfloor}+\sum\limits_{j=0}^{n-1}\binom{n-1}{\left\lfloor\frac{j}{2}\right\rfloor}\left(-2\right)^{n-1-j}\right)\\
&= \frac{1}{2}\binom{n-1}{\left\lfloor \frac{n-1}{2} \right\rfloor} -\frac{a_{n-1}}{2}.
\end{aligned}
\end{equation}

From equations \eqref{eqn:a_n}, \eqref{eqn:S_n-expanded}, and \eqref{eqn:S_n'-expanded}, we obtain
\begin{equation}\label{eqn:a_n-expanded-1}
a_n=-\frac{5a_{n-1}}{2}+\frac{1}{2}\binom{n-1}{\left\lfloor \frac{n-1}{2} \right\rfloor}+\binom{n-1}{\left\lfloor \frac{n}{2} \right\rfloor}.
\end{equation}

We now want to simplify $\frac{1}{2}\binom{n-1}{\left\lfloor \frac{n-1}{2} \right\rfloor}+\binom{n-1}{\left\lfloor \frac{n}{2} \right\rfloor}$.

Suppose $n$ is even. Then $n=2m$ for some $m\ge 1$. In this case
\[
\frac{1}{2}\binom{n-1}{\left\lfloor \frac{n-1}{2} \right\rfloor}+\binom{n-1}{\left\lfloor \frac{n}{2} \right\rfloor} = \frac{1}{2}\binom{2m-1}{m-1}+\binom{2m-1}{m} = \frac{3}{2}\binom{2m-1}{m-1}=\frac{3}{2}\binom{n-1}{\left\lfloor \frac{n-1}{2} \right\rfloor}.
\]

Now suppose $n$ is odd. Then $n=2m+1$ for some $m\ge 0$. In this case
\[
\frac{1}{2}\binom{n-1}{\left\lfloor \frac{n-1}{2} \right\rfloor}+\binom{n-1}{\left\lfloor \frac{n}{2} \right\rfloor} = \frac{3}{2}\binom{2m}{m} = \frac{3}{2}\binom{n-1}{\left\lfloor \frac{n-1}{2} \right\rfloor}.
\]

Hence
\begin{equation}\label{eqn:floor-simplify}
\frac{1}{2}\binom{n-1}{\left\lfloor \frac{n-1}{2} \right\rfloor}+\binom{n-1}{\left\lfloor \frac{n}{2} \right\rfloor} = \frac{3}{2}\binom{n-1}{\left\lfloor \frac{n-1}{2} \right\rfloor}
\end{equation}
for all $n\ge 1$.

Equations \eqref{eqn:a_n-expanded-1} and \eqref{eqn:floor-simplify} give $a_n=-\frac{5a_{n-1}}{2}+\frac{3}{2}\binom{n-1}{\left\lfloor \frac{n-1}{2} \right\rfloor}$ for $n\ge 1$, which can be rewritten as
\begin{equation}\label{eqn:a_n-expanded-2}
2a_n+5a_{n-1}=3\binom{n-1}{\left\lfloor \frac{n-1}{2} \right\rfloor}.
\end{equation}

Define
\begin{equation}\label{eqn:b_n}
b_n\coloneqq \binom{n-1}{\left\lfloor \frac{n-1}{2} \right\rfloor}
\end{equation}

for $n\ge 1$, and write

\begin{equation}\label{eqn:B(x)}
B(x)\coloneqq \sum\limits_{n\ge 1}b_nx^n.
\end{equation}

We have $B(x)=B_1(x)+B_2(x)$, where $B_1(x)\coloneqq \sum\limits_{m\ge 0} b_{2m+1}x^{2m+1}$ and $B_2(x)\coloneqq \sum\limits_{m\ge 1} b_{2m}x^{2m}$.

Notice that $b_{2m+1}=\binom{2m}{m}$ for $m\ge 0$, and $b_{2m}=\binom{2m-1}{m-1}=\frac{(2m-1)!}{(m-1)!m!}=\frac{1}{2}\binom{2m}{m}$ for $m\ge 1$.

We therefore have $B_2(x)=\frac{1}{2}\sum\limits_{m\ge 1} \binom{2m}{m}x^{2m}$ and $B_1(x)=\sum\limits_{m\ge 0}\binom{2m}{m}x^{2m+1}=x+x\sum\limits_{m\ge 1}\binom{2m}{m}x^{2m}$.

By \cite[(2.5.11)]{gfology}, we know that 
\[\sum\limits_{m\ge 0}\binom{2m}{m}x^m=\frac{1}{\sqrt{1-4x}}.\]
Hence $\sum\limits_{m\ge 1}\binom{2m}{m}x^m=\frac{1}{\sqrt{1-4x}}-1$.

This implies that 
\[B(x)=B_1(x)+B_2(x)=\left(x+\frac{1}{2}\right)\left(\frac{1}{\sqrt{1-4x^2}}-1\right)+x=\frac{2x+1}{2\sqrt{1-4x^2}}-\frac{1}{2}.\]

Rewrite the above as
\begin{equation}\label{eqn:B(x)-expanded}
2B(x)+1=\frac{2x+1}{\sqrt{1-4x^2}}.
\end{equation}

Differentiating both sides of Equation \eqref{eqn:B(x)-expanded} with respect to $x$, we obtain

\[
2B'(x)=\frac{2}{\sqrt{1-4x^2}}+\frac{2x+1}{\left(1-4x^2\right)^{3/2}}\left(-\frac{1}{2}\right)(-8x) = \frac{2}{\sqrt{1-4x^2}}+\frac{8x^2+4x}{\left(1-4x^2\right)^{3/2}} = \frac{2(2x+1)}{\left(1-4x^2\right)^{3/2}}.
\]

The above can be rewritten as
\begin{equation}\label{eqn:B(x)-diff}
(1-4x^2)B'(x)=2B(x)+1.
\end{equation}

Differentiating both sides of Equation \eqref{eqn:B(x)} with respect to $x$, we get
\begin{equation}\label{eqn:B(x)-diff-alt}
B'(x)=\sum\limits_{n\ge 1}nb_n x^{n-1}.
\end{equation}

Combining equations \eqref{eqn:B(x)}, \eqref{eqn:B(x)-diff}, and \eqref{eqn:B(x)-diff-alt} gives
\[
(1-4x^2)\sum\limits_{n\ge 1}nb_n x^{n-1}-2\sum\limits_{n\ge 1}b_n x^n=1
\],

which can be rearranged to obtain
\begin{equation}\label{eqn:7.1-first-recur}
\sum\limits_{n\ge 1}nb_n x^{n-1} - 2\sum\limits_{n\ge 1}b_n x^n - \sum\limits_{n\ge 1}4nb_nx^{n+1} = 1.
\end{equation}

Let $n\ge 3$. Comparing the coefficients of $x^{n-1}$ on both sides of Equation \eqref{eqn:7.1-first-recur}, we obtain
\[
nb_n-2b_{n-1}-4(n-2)b_{n-2}=0.
\]

Multiplying both sides of the above by 3 yields
\[
n(3b_n)-2(3b_{n-1})-4(n-2)(3b_{n-2})=0.
\]

Substituting $3b_n=2a_n+5a_{n-1}$ (from equations \eqref{eqn:a_n-expanded-2} and \eqref{eqn:b_n}) gives us
\[
2na_n+(5n-4)a_{n-1}-(8n-6)a_{n-2}-20(n-2)a_{n-3}=0
\]
for $n\ge 3$, thus completing the proof.
\end{proof}


We now move on to the conjecture on sequence $\mathrm{A105872}(n)$ (\cite{A105872}). Note that we reuse some of the notation used in the previous part of this section after redefining the symbols appropriately.

Let $a_n\coloneqq \sum\limits_{k=0}^{\left\lfloor\frac{n}{3}\right\rfloor}\binom{2n-3k}{n}$ for $n\ge 0$. In \cite{A105872}, Mathar conjectured the following result, which we prove.

\begin{thm}
For $n\ge 3$, we have
\[
3(n+1)(7n-2)a_n -6(7n+5)(2n-1)a_{n-1} +(n+1)(7n-2)a_{n-2} -2(7n+5)(2n-1)a_{n-3}=0
\]
\end{thm}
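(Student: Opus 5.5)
The coefficients of the recurrence come in two matching pairs: $(n+1)(7n-2)$ multiplies both $a_n$ and $a_{n-2}$, and $(7n+5)(2n-1)$ multiplies both $a_{n-1}$ and $a_{n-3}$. So I would first regroup the claimed identity as
\[
(n+1)(7n-2)\,d_n-2(7n+5)(2n-1)\,d_{n-1}=0,\qquad d_n\coloneqq 3a_n+a_{n-2}.
\]
Written this way it is a first-order hypergeometric recurrence for $d_n$. Its ratio $d_n/d_{n-1}=\frac{7n+5}{7n-2}\cdot\frac{2(2n-1)}{n+1}$ is exactly the ratio obtained from $(7n+5)$ times the Catalan numbers $\frac{1}{n+1}\binom{2n}{n}$. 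Hence the plan is to prove the closed form
\[
3a_n+a_{n-2}=\lambda\,\frac{7n+5}{n+1}\binom{2n}{n}
\]
for all $n\ge 2$ and a suitable constant $\lambda$ (to be read off from small values). The recurrence for $n\ge 3$ then follows by direct substitution, since $d_n$ and $d_{n-1}$ are both covered. In the same way as the previous theorem, this reduces a second-order-looking statement to a simpler closed form.

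To prove the closed form I would compute the generating function $A(x)=\sum_{n\ge0}a_nx^n$. Substituting $n=m+3k$ gives $\binom{2n-3k}{n}=\binom{2m+3k}{m}$, and the constraint $k\le\lfloor n/3\rfloor$ is exactly $m\ge 0$. So I would use the standard identity $\sum_{m\ge0}\binom{2m+s}{m}x^m=\frac{C(x)^s}{\sqrt{1-4x}}$, where $C(x)=\frac{1-\sqrt{1-4x}}{2x}$ is the Catalan generating function (in \cite{gfology}). Summing the resulting geometric series in $k$ yields
\[
A(x)=\frac{1}{\sqrt{1-4x}\,\bigl(1-x^3C(x)^3\bigr)}.
\]

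I would then parametrise by $u=xC(x)$, so that $x=u(1-u)$ and $\sqrt{1-4x}=1-2u$. Both $(3+x^2)A(x)$, which is the generating function of $d_n$ up to the boundary terms $n=0,1$, and $\sum_n \frac{7n+5}{n+1}\binom{2n}{n}x^n = 7x\,\frac{d}{dx}\bigl(xC(x)\bigr)/x+5C(x)$ (or equivalently $7/\sqrt{1-4x}-2C(x)$) become rational functions of $u$. The claim then reduces to a polynomial identity in $u$.

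The main obstacle is this last algebraic verification. It requires the factor $1-u^3$ in the denominator to cancel against $3+u^2(1-u)^2$ and the other factors, together with the correct handling of the initial terms $a_0,a_1$ so that the identity holds exactly for $n\ge 2$. If the factorisation does not come out cleanly, the fallback is a different route: derive a first-order linear differential equation with polynomial coefficients for $(3+x^2)A(x)$ from its algebraic expression, as was done with Equation \eqref{eqn:B(x)-diff}, and compare coefficients of $x^{n-1}$.
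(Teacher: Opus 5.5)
Your proposal is correct and is essentially the paper's proof: the paper also obtains $A(x)=1/\bigl(\sqrt{1-4x}\,(1-x^3C(x)^3)\bigr)$ and proves $3a_n+a_{n-2}=\frac{7n+5}{2(n+1)}\binom{2n}{n}$ (so $\lambda=\tfrac12$, valid already for $n\ge 1$). It then substitutes the first-order recurrence of the right-hand side, and differs from you only in simplifying via $xC^2=C-1$ and rationalising rather than via your parametrisation $x=u(1-u)$, where the step you feared closes cleanly because $3+u^2(1-u)^2=(1+u+u^2)(3-3u+u^2)$, giving $(3+x^2)A(x)=\frac{3-3u+u^2}{(1-2u)(1-u)}=\frac{7}{2(1-2u)}-\frac{1}{1-u}+\frac12$ (note that $7x\frac{d}{dx}(xC(x))/x+5C(x)$ should read $7xC'(x)+5C(x)$, though your alternative $7/\sqrt{1-4x}-2C(x)$ is right).
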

\begin{proof}
Let
\[A(x)\coloneqq \sum\limits_{n\ge 0}a_nx^n=\sum\limits_{n\ge 0}\sum\limits_{k=0}^{\left\lfloor\frac{n}{3}\right\rfloor}\binom{2n-3k}{n}x^n.\]

Let $m=n-3k$.

Then
\begin{equation}\label{eqn:A(x)-2}
A(x)=\sum\limits_{n\ge 0}\sum\limits_{k=0}^{\left\lfloor\frac{n}{3}\right\rfloor}\binom{2n-3k}{n}x^n=\sum\limits_{k\ge 0}\sum\limits_{m\ge 0}\binom{2m+3k}{m+3k}x^{m+3k}=\sum\limits_{k\ge 0}x^{3k}\sum\limits_{m\ge 0}\binom{2m+3k}{m}x^m.
\end{equation}

By \cite[(2.5.15)]{gfology}, we have 
\[
\sum\limits_{m\ge 0}\binom{2m+3k}{m}x^m=\frac{1}{\sqrt{1-4x}}\left(\frac{1-\sqrt{1-4x}}{2x}\right)^{3k}.
\]

Let $u\coloneqq \sqrt{1-4x}$ and $y\coloneqq \frac{1-u}{2x}$.

Making the above substitutions in Equation \eqref{eqn:A(x)-2} yields 
\begin{equation}\label{eqn:A(x)-3}
A(x)=\sum\limits_{k\ge 0} x^{3k}\frac{1}{\sqrt{1-4x}}\left(\frac{1-\sqrt{1-4x}}{2x}\right)^{3k} = \sum\limits_{k\ge 0}x^{3k}\frac{y^{3k}}{u} = \frac{1}{u}\sum\limits_{k\ge 0} \left(x^3y^3\right)^k = \frac{1}{u(1-x^3y^3)}.
\end{equation}

We have that $y=\frac{1-\sqrt{1-4x}}{2x}$. This implies that $xy^2-y+1=0$.

Using this we obtain $x^3y^3=x^2yxy^2=x^2y(y-1)=x^2y^2-x^2y=x(y-1)-x^2y$, and therefore
\[
1-x^3y^3 = 1+x-x(1-x)y = 1+x-x(1-x)\frac{(1-u)}{2x} = \frac{1+3x}{2}+\frac{(1-x)u}{2}.
\]

Substituting this in Equation \eqref{eqn:A(x)-3} gives
\begingroup
\allowdisplaybreaks
\begin{align}\label{eqn:A(x)-4}
A(x) &= \frac{1}{u\left(\frac{1+3x}{2}+\frac{(1-x)u}{2}\right)}\\
&= \frac{2}{(1-x)u^2+(1+3x)u}\notag\\
&= \frac{2}{(1-x)(1-4x)+(1+3x)u}\notag\\
&= \frac{2((1-x)(1-4x)-(1+3x)u)}{((1-x)(1-4x)+(1+3x)u)((1-x)(1-4x)-(1+3x)u)}\notag\\
&= \frac{2(1-x)(1-4x)-2(1+3x)u}{\left(1-x\right)^2\left(1-4x\right)^2-\left(1+3x\right)^2(1-4x)}\notag\\
&= \frac{2(1-x)(1-4x)-2(1+3x)u}{-4x(1-4x)(3+x^2)}\notag\\
&= \frac{1+3x}{2(3+x^2)u} - \frac{(1-x)}{2x(3+x^2)}.\notag
\end{align}
\endgroup

Multiplying by $3+x^2$, we obtain
\begin{equation}\label{eqn:A(x)-5}
(3+x^2)A(x) = \frac{1+3x}{2x\sqrt{1-4x}} - \frac{1}{2x} + \frac{1}{2}.  
\end{equation}

Let $n\ge 3$. We now compare the coefficients of $x^n$ on both sides of Equation \eqref{eqn:A(x)-5}. We have
\begin{equation}\label{eqn:coeff-1}
[x^n] (3+x^2)A(x)=3a_n+a_{n-2}
\end{equation}
and
\begin{equation}\label{eqn:coeff-2}
\begin{aligned}
[x^n]\left(\frac{1}{2x\sqrt{1-4x}} + \frac{3}{2\sqrt{1-4x}} - \frac{1}{2x} + \frac{1}{2}\right) &= [x^n]\left(\frac{1}{2x\sqrt{1-4x}} + \frac{3}{2\sqrt{1-4x}}\right)\\
&= \frac{1}{2}\binom{2n+2}{n+1}+\frac{3}{2}\binom{2n}{n},
\end{aligned}
\end{equation}
where the last equality follows from \cite[(2.5.11)]{gfology}.

Simplifying further yields
\begin{align*}
\frac{1}{2}\binom{2n+2}{n+1}+\frac{3}{2}\binom{2n}{n} &= \frac{1}{2}\frac{2(2n+1)((2n)!)}{(n+1)(n!n!)} + \frac{3}{2}\binom{2n}{n}\\
&= \left(\frac{2n+1}{n+1}+\frac{3}{2}\right)\binom{2n}{n}\\
&= \frac{7n+5}{2(n+1)}\binom{2n}{n}.
\end{align*}

Thus, for $n\ge 3$,
\begin{equation}\label{eqn:7.2-first-recur}
3a_n+a_{n-2} = \frac{7n+5}{2(n+1)}\binom{2n}{n}.
\end{equation}

Let
\begin{equation}\label{eqn:7.2-b_n}
b_n \coloneqq \frac{7n+5}{2(n+1)}\binom{2n}{n}.
\end{equation}

Then, for $n\ge 3$,
\begin{equation}\label{eqn:7.2-b_n-2}
\frac{b_n}{b_{n-1}} = \frac{\frac{7n+5}{2(n+1)}\binom{2n}{n}}{\frac{7n-2}{2n}\binom{2n-2}{n-1}} = \left(\frac{7n+5}{n+1}\right)\left(\frac{n}{7n-2}\right)\left(\frac{2(2n-1)}{n}\right) = \frac{2(2n-1)(7n+5)}{(n+1)(7n-2)}.
\end{equation}

By Equation \eqref{eqn:7.2-b_n-2}, for $n\ge 3$, we obtain
\begin{equation}\label{eqn:7.2-b_n-recur}
(n+1)(7n-2)b_n-2(2n-1)(7n+5)b_{n-1}=0.
\end{equation}

Using equations \eqref{eqn:7.2-first-recur} and \eqref{eqn:7.2-b_n} in Equation \eqref{eqn:7.2-b_n-recur} gives
\[
(n+1)(7n-2)(3a_n+a_{n-2})-2(2n-1)(7n+5)(3a_{n-1}+a_{n-3})=0
\]
for $n\ge 3$, which simplifies to
\[
3(n+1)(7n-2)a_n -6(7n+5)(2n-1)a_{n-1} +(n+1)(7n-2)a_{n-2} -2(7n+5)(2n-1)a_{n-3}=0,
\]

thus completing the proof.
\end{proof}

\section{An asymptotic relation satisfied by a binomial coefficient sum}\label{sec 8}

Using notation from \cite{ECombi}, for $f,g:\mathbb{Z}_{\ge 0}\to\mathbb{C}$, we write $f\sim g$ if $\lim\limits_{n\to\infty} \frac{f(n)}{g(n)}=1$.


In \cite{A094213}, McEachen conjectured that $\sum\limits_{k=0}^n\binom{9n}{9k}\sim \frac{1}{9}\exp(9n\log(2))=\frac{2^{9n}}{9}$. We prove a generalisation of this conjecture.

\begin{thm}
For any positive integer $m$, we have $\sum\limits_{k=0}^n\binom{mn}{mk}\sim \frac{1}{m}\exp(mn\log(2))=\frac{2^{mn}}{m}$.
\end{thm}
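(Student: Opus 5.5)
The plan is to evaluate the sum exactly with the roots-of-unity filter and then see that only the term coming from $\omega^0=1$ has size $2^{mn}$. Let $\omega\coloneqq e^{2\pi i/m}$. Recall that $\frac{1}{m}\sum_{j=0}^{m-1}\omega^{jr}$ equals $1$ if $m\mid r$ and $0$ otherwise. The indices $r$ with $0\le r\le mn$ and $m\mid r$ are exactly $r=mk$ with $0\le k\le n$. Therefore
\[
\sum_{k=0}^n\binom{mn}{mk}=\sum_{r=0}^{mn}\binom{mn}{r}\frac{1}{m}\sum_{j=0}^{m-1}\omega^{jr}=\frac{1}{m}\sum_{j=0}^{m-1}\left(1+\omega^j\right)^{mn},
\]
where the last step swaps the two finite sums and applies the binomial theorem. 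The case $m=1$ is trivial, since the sum is then exactly $2^n$, so assume $m\ge 2$.

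The term $j=0$ contributes $\frac{1}{m}2^{mn}$. For the other terms, dividing by $\frac{2^{mn}}{m}$ gives
\[
\frac{\sum_{k=0}^n\binom{mn}{mk}}{2^{mn}/m}=1+\sum_{j=1}^{m-1}\left(\frac{1+\omega^j}{2}\right)^{mn}.
\]
It therefore suffices to show that $\left|\frac{1+\omega^j}{2}\right|<1$ for $1\le j\le m-1$. Writing $\omega^j=e^{i\theta}$ with $\theta=2\pi j/m\in(0,2\pi)$, we get $|1+e^{i\theta}|=2|\cos(\theta/2)|$. Since $\theta/2\in(0,\pi)$, we have $|\cos(\theta/2)|<1$.

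Let $q\coloneqq\max_{1\le j\le m-1}|\cos(\pi j/m)|$, which is less than $1$. The error term is then bounded in absolute value by $(m-1)q^{mn}$, and this tends to $0$ as $n\to\infty$ because $m$ is fixed. Hence the ratio tends to $1$, which proves the theorem.

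There is no serious obstacle. The only points needing care are the strict inequality for $j\neq 0$, where one must check that no other $m$-th root of unity equals $1$, and the exact reindexing of the filter. For the reindexing, the range $0\le r\le mn$ picks up precisely the values $k=0,\dots,n$, so no terms are lost at the boundary.
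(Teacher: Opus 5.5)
Your proof is correct and follows the paper's argument: the roots-of-unity (multisection) formula $\sum_{k=0}^n\binom{mn}{mk}=\frac{1}{m}\sum_{s=0}^{m-1}(1+\alpha_m^s)^{mn}$, followed by $|1+\alpha_m^s|<2$ for $s\neq 0$, so the non-principal terms are $o(2^{mn})$. Your version is slightly cleaner: you bound by $(m-1)q^{mn}$ with $q=\max_j|\cos(\pi j/m)|$ and treat $m=1$ separately, whereas the paper asserts $0<R_m$ and divides by $R_m$, which breaks for $m=2$ (where $R_2=0$) and leaves $R_1$ undefined.
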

\begin{proof}
For $m\in\mathbb{N}$ and $n\in\mathbb{N}_0$, define the following:
\begin{enumerate}
\item $f(m,n)\coloneqq \sum\limits_{k=0}^n\binom{mn}{mk}$.
\item $g_{m,n}(x)\coloneqq (1+x)^{mn}=\sum\limits_{r=0}^{mn}\binom{mn}{r}x^r$.
\item $h_{m,n}(x)\coloneqq \sum\limits_{r=0}^{n}\binom{mn}{mr}x^{mr}$.
\end{enumerate}

Then $f(m,n)=h_{m,n}(1)$.

Let $\alpha_m\coloneqq \exp(\frac{2\pi i}{m})$.

By \cite[Equation~(6)]{Multisection}, we have 
\[h_{m,n}(x)=\frac{1}{m}\sum\limits_{s=0}^{m-1}g_{m,n}(\alpha_m^sx)=\frac{1}{m}\sum\limits_{s=0}^{m-1}\left(1+\alpha_m^sx\right)^{mn}.
\]

This implies that $f(m,n)=\frac{1}{m}\sum\limits_{s=0}^{m-1}\left(1+\alpha_m^s\right)^{mn}$.

We want to evaluate $\lim\limits_{n\to\infty} \frac{mf(m,n)}{2^{mn}}$.

We have
\begin{align*}
\lim\limits_{n\to\infty} \frac{mf(m,n)}{2^{mn}} &= \lim\limits_{n\to\infty} \frac{\sum\limits_{s=0}^{m-1}\left(1+\alpha_m^s\right)^{mn}}{2^{mn}}\\
&= \lim\limits_{n\to\infty} \frac{2^{mn}+\sum\limits_{s=1}^{m-1}\left(1+\alpha_m^s\right)^{mn}}{2^{mn}}\\
&= 1+\lim\limits_{n\to\infty} \frac{\sum\limits_{s=1}^{m-1}\left(1+\alpha_m^s\right)^{mn}}{2^{mn}}.
\end{align*}

We also have
\[
\left|\lim\limits_{n\to\infty} \frac{\sum\limits_{s=1}^{m-1}\left(1+\alpha_m^s\right)^{mn}}{2^{mn}}\right| = \lim\limits_{n\to\infty} \frac{\left|\sum\limits_{s=1}^{m-1}\left(1+\alpha_m^s\right)^{mn}\right|}{2^{mn}} \le \lim\limits_{n\to\infty} \frac{\sum\limits_{s=1}^{m-1}\left|1+\alpha_m^s\right|^{mn}}{2^{mn}}
\]

and

\[|1+\alpha_m^s|=\left|1+\cos\left(\frac{2\pi s}{m}\right)+i\sin\left(\frac{2\pi s}{m}\right)\right|=\sqrt{\left(1+\cos\left(\frac{2\pi s}{m}\right)\right)^2+\left(\sin\left(\frac{2\pi s}{m}\right)\right)^2}\]

\[=\sqrt{1+\left(\cos\left(\frac{2\pi s}{m}\right)\right)^2+\left(\sin\left(\frac{2\pi s}{m}\right)\right)^2+2\cos\left(\frac{2\pi s}{m}\right)}=\sqrt{2\left(1+\cos\left(\frac{2\pi s}{m}\right)\right)}.
\]

This shows that $|1+\alpha_m^s|<2$ for $s\in\{1,\dots,m-1\}$.

Let $R_m=\max\{|1+\alpha_m^s|:1\le s\le m-1\}$. Then $0<R_m<2$.

This gives

\[
0\le \lim\limits_{n\to\infty} \frac{\sum\limits_{s=1}^{m-1}\left|1+\alpha_m^s\right|^{mn}}{2^{mn}} \le \lim\limits_{n\to\infty} \frac{(m-1)R_m^{mn}}{2^{mn}}= \lim\limits_{n\to\infty} \frac{m-1}{\left(\frac{2}{R_m}\right)^{mn}}=0,\text{ since $\frac{2}{R_m}>1$.}
\]

Thus $\lim\limits_{n\to\infty} \frac{mf(m,n)}{2^{mn}}=1+\lim\limits_{n\to\infty} \frac{\sum\limits_{s=1}^{m-1}\left(1+\alpha_m^s\right)^{mn}}{2^{mn}}=1$, thereby completing the proof.
\end{proof}

\printbibliography

\end{document}